\tikzset{snake it/.style={decorate, decoration=snake}}
\tikzstyle{edge}=[very thick]
\definecolor{bostonuniversityred}{rgb}{0.8, 0.0, 0.0}
\definecolor{arsenic}{rgb}{0.23, 0.27, 0.29}
\tikzstyle{diredge}=[postaction={decorate,decoration={markings,
\tikzset{
    arrow/.style={decoration={markings, mark=at position 0.7 with
    {\fill(-0.09*#1,-0.03*#1) -- (0,0) -- (-0.09*#1,0.03*#1) -- cycle;}}, postaction={decorate}},
    arrow/.default=1
}
\tikzset{
    arow/.style={decoration={markings, mark=at position 1 with
    {\fill(-0.09*#1,-0.03*#1) -- (0,0) -- (-0.09*#1,0.03*#1) -- cycle;}}, postaction={decorate}},
    arow/.default=1
}
\tikzset{
    arrrow/.style={decoration={markings, mark=at position 0.9 with
    {\fill(-0.09*#1,-0.03*#1) -- (0,0) -- (-0.09*#1,0.03*#1) -- cycle;}}, postaction={decorate}},
    arow/.default=1
}
\newcommand{\fitellipsis}[2] 
{\draw [fill=white]let \p1=(#1), \p2=(#2), \n1={atan2(\y2-\y1,\x2-\x1)}, \n2={veclen(\y2-\y1,\x2-\x1)}
    in ($ (\p1)!0.5!(\p2) $) ellipse [ x radius=\n2/2+0cm, y radius=1.1cm, rotate=\n1];
}
\newcommand{\Fitellipsis}[2] 
{\draw [fill=white]let \p1=(#1), \p2=(#2), \n1={atan2(\y2-\y1,\x2-\x1)}, \n2={veclen(\y2-\y1,\x2-\x1)}
    in ($ (\p1)!0.5!(\p2) $) ellipse [ x radius=\n2/2+0cm, y radius=1.4cm, rotate=\n1];
}
\theoremstyle{plain}
\newtheorem*{thm*}{Theorem}
\newtheorem{thm}{Theorem}[section]
\Crefname{thm}{Theorem}{Theorems}
\newtheorem*{lem*}{Lemma}
\newtheorem{lem}[thm]{Lemma}
\Crefname{lem}{Lemma}{Lemmas}
\newtheorem*{claim*}{Claim}
\newtheorem{claim}[thm]{Claim}
\crefname{claim}{Claim}{Claims}
\Crefname{claim}{Claim}{Claims}
\Crefname{prop}{Proposition}{Propositions}
\Crefname{remar}{Remark}{Remarks}
\crefname{cor}{Corollary}{Corollaries}
\newtheorem*{conj*}{Conjecture}
\crefname{conj}{Conjecture}{Conjectures}
\Crefname{qn}{Question}{Questions}
\Crefname{obs}{Observation}{Observations}
\Crefname{ex}{Example}{Examples}
\theoremstyle{definition}
\Crefname{prob}{Problem}{Problems}
\newtheorem{defn}[thm]{Definition}
\Crefname{defn}{Definition}{Definitions}
\theoremstyle{remark}
\renewenvironment{proof}[1][]{\begin{trivlist}
\item[\hspace{\labelsep}{\bf\noindent Proof#1.\/}] }{\qed\end{trivlist}}
\newcommand{\remove}[1]{}
\title{\vspace{-0.85 cm}
A generalization of Bondy's pancyclicity theorem}
\date{}
\author{
Nemanja Dragani\'c\thanks{
Department of Mathematics, ETH, Z\"urich, Switzerland. Research supported in part by SNSF grant 200021\_196965.
\newline
\emph{Emails}: \textbf{\{nemanja.draganic,david.munhacanascorreia, benjamin.sudakov\}@math.ethz.ch}.
}
\and
David Munh\'a Correia\footnotemark[1]
\and
Benny Sudakov\footnotemark[1]}
\begin{document} 
\maketitle
\begin{abstract}
The \emph{bipartite independence number} of a graph $G$, denoted as $\tilde\alpha(G)$, is the minimal number $k$ such that there exist positive integers $a$ and $b$ with $a+b=k+1$ with the property that for any two sets $A,B\subseteq V(G)$ with $|A|=a$ and $|B|=b$, there is an edge between $A$ and $B$.
McDiarmid and Yolov showed that if $\delta(G)\geq\tilde \alpha(G)$ then $G$ is Hamiltonian, extending the famous theorem of Dirac which states that if $\delta(G)\geq  |G|/2$ then $G$ is Hamiltonian.
In 1973, Bondy showed that, unless $G$ is a complete bipartite graph, Dirac's Hamiltonicity condition also implies pancyclicity, i.e., existence of cycles of all the lengths from $3$ up to $n$.
In this paper we show that $\delta(G)\geq\tilde \alpha(G)$ implies that $G$ is pancyclic or that $G=K_{\frac{n}{2},\frac{n}{2}}$, thus extending the result of McDiarmid and Yolov, and generalizing the classic theorem of Bondy.
\end{abstract}

\section{Introduction}

The notion of Hamiltonicity is one of most central and extensively studied topics in Combinatorics. Since the problem of determining whether a graph is Hamiltonian is NP-complete, a central theme in Combinatorics is to derive sufficient conditions for this property. A classic example is Dirac’s theorem \cite{dirac1952some} which dates back to 1952 and states that every $n$-vertex graph with minimum degree at least $n/2$ is Hamiltonian. Since then, a plethora of interesting and important results about various aspects of Hamiltonicity have been obtained, see e.g. \cite{ajtai1985first,chvatal1972note,kuhn2013hamilton,krivelevich2011critical,krivelevich2014robust,MR3545109,ferber2018counting, cuckler2009hamiltonian, posa1976hamiltonian}, and the surveys \cite{gould2014recent, MR3727617}.

Besides finding sufficient conditions for containing a Hamilton cycle, significant attention has been given to conditions which force a graph to have cycles of other lengths. Indeed, \emph{the cycle spectrum of a graph}, which is the set of lengths of cycles contained in that graph, has been the focus of study of numerous papers and in particular gained a lot of attention in recent years \cite{liu2020solution, alon2022cycle, friedman2021cycle, liu2018cycle, verstraete2016extremal, alon2021divisible, milans2012cycle, keevash2010pancyclicity, ourpaper,bucic2021cycles}. Among other graph parameters, the relation of the cycle spectrum to the minimum degree, number of edges, independence number, chromatic number and expansion of the graph have been studied.

We say that an $n$-vertex graph is \emph{pancyclic} if the cycle spectrum contains all integers from $3$ up to $n$. Bondy suggested that in the cycle spectrum of a graph, it is usually hardest to guarantee the existence of the longest cycle, i.e. a Hamilton cycle.  This intuition was captured by his famous meta-conjecture \cite{bondy10pancyclic} from 1973, which asserts that any non-trivial condition which implies Hamiltonicity, also implies pancyclicity (up to a small class of exceptional graphs). As a first example, he proved in \cite{bondy1971pancyclic} an extension of Dirac's theorem, showing that minimum degree at least $n/2$ implies that the graph is either pancyclic or that it is the complete bipartite graph $K_{\frac{n}{2},\frac{n}{2}}$. Further, Bauer and Schmeichel \cite{bauer1990hamiltonian}, relying on previous results of Schmeichel and Hakimi \cite{schmeichel1988cycle}, showed that the sufficient conditions for Hamiltonicity given by Bondy \cite{bondy1980longest}, Chvátal \cite{chvatal1972hamilton} and Fan \cite{fan1984new} all imply pancyclicity, up to a certain small family of exceptional graphs.

Another classic Hamiltonicity result is the Chv\'atal-Erd\H{o}s theorem, which states that $\kappa(G)\geq \alpha(G)$ implies that $G$ is Hamiltonian, where $\kappa(G)$ is the connectivity of $G$, and $\alpha(G)$ its independence number. Motivated by Bondy's meta-conjecture, Jackson and Ordaz \cite{jackson1990chvatal} thirty years ago suggested that $\kappa(G)> \alpha(G)$ already implies pancyclicity. The first progress towards this problem was obtained by Keevash and Sudakov, who showed pancyclicity when $\kappa(G)\geq 600\alpha(G)$. Recently, in \cite{ourJOpaper} we were able to resolve the
Jackson-Ordaz conjecture asymptotically, proving that 
$\kappa(G)\geq (1+o(1))\alpha(G)$ is already enough for pancyclicity. It is worth mentioning that, in all the listed work, the proof that the Hamiltonicity condition also implies pancyclicity is usually significantly harder than just proving Hamiltonicity, and requires new ideas and techniques.

An interesting sufficient condition for Hamiltonicity was given by McDiarmid and Yolov \cite{mcdiarmid2017hamilton}. To state their result, we need the following natural graph parameter. For a graph $G$, its \emph{bipartite independence number} $\tilde\alpha(G)$ is the minimal number $k$, such that there exist positive integers $a$ and $b$ with $a+b=k+1$, such that between any two sets $A,B\subseteq V(G)$ with $|A|=a$ and $|B|=b$, there is an edge between $A$ and $B$. Notice that we always have that $\alpha(G)\leq \tilde \alpha (G)$. Indeed, if $\tilde\alpha(G)=k$, then $G$ does not contain independent sets $I$ of size at least $k+1$, since evidently for every $a+b=k+1$, there would exist disjoint sets $A,B\subset I$, so that $|A|=a$ and $|B|=b$ and with no edge between $A$ and $B$. Let us now state the result of McDiarmid and Yolov.

\begin{thm}[\cite{mcdiarmid2017hamilton}]\label{thm:mcdiarmid}
If $\delta(G)\geq \tilde\alpha(G)$, then $G$ is Hamiltonian.
\end{thm}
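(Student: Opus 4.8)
Suppose for contradiction that $\delta(G)\ge\tilde\alpha(G)$ while $G$ is not Hamiltonian, and let $a\le b$ be positive integers with $a+b=\tilde\alpha(G)+1$ such that every set of size $a$ sends an edge to every set of size $b$ (these exist by the definition of $\tilde\alpha$). The plan is to produce two disjoint sets $A,B\subseteq V(G)$ with $|A|=a$, $|B|=b$ and no edge between them, contradicting this choice of $a,b$. First I would clear away the connectivity preliminaries. Since every vertex has at least $\delta(G)\ge a+b-1$ neighbours, every component of $G$ has at least $a+b$ vertices, so a disconnected $G$ would already contain a bad pair; hence $G$ is connected. Similarly, if $T$ were a separator with $|T|\le\delta(G)-\tilde\alpha(G)+a$, then each component of $G-T$ would have at least $\delta(G)-|T|+1\ge b$ vertices, and two such components would contain disjoint sets of sizes $a$ and $b$ with no edge between them; so $\kappa(G)\ge a+1$.

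Next I would bring in a longest cycle $C$ of $G$, which exists since $G$ is $2$-connected; if $|C|=n$ then $C$ is a Hamilton cycle, so assume $|C|<n$. Fix a component $H$ of $G-V(C)$, set $A_H=N(V(H))\cap V(C)$ and $r=|A_H|$, orient $C$, and write $c^{+}$ for the successor of $c$ on $C$. The technical core is a rerouting argument establishing: (i) no two consecutive vertices of $C$ both lie in $A_H$, so $A_H^{+}:=\{c^{+}:c\in A_H\}$ has size $r$ and is disjoint from $A_H$; (ii) there is no edge between $V(H)$ and $A_H^{+}$; (iii) $A_H^{+}$ is an independent set. The point is that a failure of any of these would let one reroute $C$ through a path of $H$ — in case (iii) also crossing two arcs of $C$ — and so build a cycle containing vertices off $C$, hence longer than $C$, a contradiction. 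Separately, if $u$ is an endpoint of a longest path of $G[V(H)]$ then all neighbours of $u$ lie in $(V(H)\setminus\{u\})\cup A_H$, whence $|V(H)|+r\ge\delta(G)+1\ge a+b$.

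To conclude, note that $A_H\subsetneq V(C)$ by (i), so $A_H$ separates $V(H)$ from the nonempty set $V(C)\setminus A_H$, giving $r\ge\kappa(G)\ge a+1$. Now $A_H^{+}\cup V(H)$ spans edges only inside $V(H)$: none inside $A_H^{+}$ by (iii), and none between the two parts by (ii). So I would let $A$ be any $a$ vertices of $A_H^{+}$ and $B$ be any $b$ vertices of $(A_H^{+}\setminus A)\cup V(H)$, a set of size $(r-a)+|V(H)|\ge b$; then $A,B$ are disjoint with no edge between them — the contradiction we sought — so $G$ is Hamiltonian. I expect the main obstacle to be the rerouting argument of the second paragraph, specifically choosing precisely which edges of $C$ to delete and which arc to reverse so that the new closed walk is a genuine cycle longer than $C$; by contrast, the reason the bipartite independence number (rather than $\alpha(G)$) is the right hypothesis appears only in the final step, where the single independent set $A_H^{+}$ together with the leftover component $H$ is split into two non-adjacent sets of the prescribed sizes $a$ and $b$ — something a bound on $\alpha(G)$ alone, which would only see an independent set of size about $|A_H^{+}|+1$, cannot provide.
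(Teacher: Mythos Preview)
The paper does not give its own proof of Theorem~\ref{thm:mcdiarmid}: it is quoted from \cite{mcdiarmid2017hamilton} as a known result and is invoked only as a black box in the case $b\le 2$ at the end of the argument. So there is no proof in the paper to compare your attempt against.

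That said, your outline is correct and is essentially the original McDiarmid--Yolov argument, which is a Chv\'atal--Erd\H{o}s–style longest-cycle proof. The connectivity bound $\kappa(G)\ge a+1$, the three rerouting facts (i)--(iii) about $A_H$ and $A_H^{+}$, the degree count $|V(H)|+r\ge \delta(G)+1\ge a+b$, and the final splitting of $A_H^{+}\cup V(H)$ into an $a$-set inside $A_H^{+}$ and a $b$-set in the remainder are exactly the ingredients of their proof. Two minor remarks: the ``endpoint of a longest path in $H$'' is unnecessary, since any $u\in V(H)$ already has $N(u)\subseteq (V(H)\setminus\{u\})\cup A_H$; and for (iii) you should be explicit that the path through $H$ between the two attachment vertices exists because $H$ is a single component, which is precisely why you fixed a component rather than all of $G-V(C)$.
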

\noindent This result implies Dirac's theorem, because if $\delta(G)\geq n/2$, then $\lceil n/2\rceil \geq \tilde \alpha(G)$, as for every $|A|=1$ and $|B|=\lceil n/2\rceil$ there is an edge between $A$ and $B$. Hence also $\delta(G)\geq\lceil n/2\rceil\geq \tilde\alpha(G)$, so $G$ is Hamiltonian.
 
Naturally, the immediate question which arises is whether the McDiarmid-Yolov condition implies that the graph satisfies the stronger property of pancyclicity. As a very preliminary step in this direction, Chen~ \cite{chen2022hamilton} was able to show that for any given positive constant $c$, for sufficiently large $n$ it holds that if $G$ is an $n$-vertex graph with $\tilde\alpha(G)=cn$ and $\delta(G)\geq \frac{10}{3}cn$, then $G$ is pancyclic.
In this paper we completely resolve this problem, showing that $\delta(G)\geq\tilde \alpha(G)$ implies that $G$ pancyclic  or $G=K_{\frac{n}{2},\frac{n}{2}}$. This generalizes the classical theorem of Bondy \cite{bondy1971pancyclic}, and gives additional evidence for his meta-conjecture, mentioned above.

\begin{thm}\label{thm:main}
If $\delta(G)\geq \tilde\alpha(G)$, then $G$ is pancyclic, unless $G$ is complete bipartite $G=K_{\frac{n}{2},\frac{n}{2}}$.
\end{thm}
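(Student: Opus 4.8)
The plan is to follow the general strategy of Bondy's original argument, adapted to the weaker bipartite-independence hypothesis, and to interface with the machinery from the authors' recent work on the Chv\'atal--Erd\H{o}s analogue (c\cite{ourJOpaper}). By Theorem~\ref{thm:mcdiarmid} the graph $G$ is Hamiltonian; fix a Hamilton cycle $C = v_1 v_2 \cdots v_n v_1$. The natural approach is ``cycle shortening'': assuming $G$ has a cycle of length $\ell+1$ but not of length $\ell$, one derives strong structural constraints, eventually forcing $G = K_{n/2,n/2}$. The first step is to handle the top of the spectrum. If $C$ has a chord $v_i v_j$, then splitting $C$ at the chord produces two cycles whose lengths sum to $n+2$, and by rotating the chord along $C$ one expects to get cycles of many consecutive lengths; so one should first prove that either $G$ is pancyclic or $G$ is bipartite (a standard consequence of the Bondy-type chord analysis, since a non-bipartite Hamiltonian graph with enough chords contains an odd and an even cycle short of $n$, which can be combined). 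The key point where $\tilde\alpha$ enters is to bound the number of ``missing'' chords: if many consecutive vertices on $C$ have no chords among a set of putative endpoints, the corresponding two sets $A,B$ on the cycle would be non-adjacent, contradicting $|A|+|B| > \tilde\alpha(G) \le \delta(G)$.

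Next I would set up the main extremal configuration. Suppose $G$ is bipartite with parts $X,Y$. Since $\delta(G) \ge \tilde\alpha(G) \ge \alpha(G) \ge \max(|X|,|Y|)$, we get $|X| = |Y| = n/2$ and $\delta(G) \ge n/2$, so in fact every vertex of $X$ is adjacent to every vertex of $Y$, i.e. $G = K_{n/2,n/2}$ --- this disposes of the bipartite case entirely and matches the exceptional graph in the statement. So the real work is the non-bipartite case: assuming $G$ is Hamiltonian and non-bipartite, I must show it is pancyclic. Here I would argue that $G$ contains cycles of all lengths in some initial segment $[3,g]$ and some terminal segment $[h,n]$ using the chord-rotation argument above (the terminal segment coming from chords of $C$, the initial segment from the fact that $\tilde\alpha(G)\le \delta(G)$ forces $G$ to be locally dense, so short cycles of every length appear near any vertex of high degree), and then close the gap.

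The heart of the argument, and the step I expect to be the main obstacle, is closing the gap in the middle of the spectrum: ruling out the existence of an $\ell$ with $3 < \ell < n$ such that $G$ has an $(\ell+1)$-cycle but no $\ell$-cycle. The standard move is: take a cycle $D$ of length $\ell+1$, look at the vertex $w \notin D$ that would let us ``shortcut'' --- more precisely, consider two consecutive vertices $x,y$ on $D$ and a vertex $z\notin D$; if $z$ were adjacent to both $x$ and $y$ we could not directly shorten, but adjacency of $z$ to $x$ together with adjacency of the $D$-neighbor pattern gives a cycle of length $\ell$. Iterating over all choices, the set of vertices outside $D$ with a ``bad'' neighborhood pattern, together with an appropriate set of vertices on $D$, must form a large bipartite hole, and $|A| + |B| > \tilde\alpha(G)$ gives a contradiction unless $\ell + 1 = n$, i.e. $D$ is Hamiltonian; but the $\ell+1=n$ case is exactly where a non-bipartite $G$ with a chord produces an $\ell$-cycle by the first step. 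Making the counting here robust --- correctly choosing the two sets $A$ and $B$ so that their non-adjacency genuinely follows from the absence of an $\ell$-cycle, and ensuring $|A|+|B|$ beats $\delta(G) \ge \tilde\alpha(G)$ rather than merely $\alpha(G)$ --- is the delicate part, and likely requires a more careful ``two-sided'' rotation-extension argument (rotating both endpoints of a path while tracking forbidden adjacencies) rather than the single-chord slide that suffices in Bondy's original proof. I would structure the final write-up as: (i) reduce to $G$ non-bipartite and Hamiltonian; (ii) establish pancyclicity on $[n - t, n]$ for suitable $t$ via chords of $C$; (iii) establish pancyclicity on $[3,t']$ via local density; (iv) a gap-closing lemma handling the interval in between by the bipartite-hole counting argument.
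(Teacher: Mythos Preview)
Your proposal is a research plan rather than a proof: the decisive step (iv), the gap-closing lemma, is explicitly left open (``likely requires a more careful two-sided rotation-extension argument''), and nothing in the sketch explains how to carry it out. The concrete obstacle is that the non-existence of a $C_\ell$, given a $C_{\ell+1}$, forbids only very specific adjacencies (chords $u_iu_{i+2}$ and a handful of patterns through one external vertex); it is far from clear how these local constraints aggregate into two sets $A,B$ of sizes $a$ and $b$ with no $A$--$B$ edge. In Bondy's setting $\delta(G)\ge n/2$ forces so many chords on the Hamilton cycle that a direct pigeonhole works, but under $\delta(G)\ge\tilde\alpha(G)$, which can be $o(n)$, that luxury disappears. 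Steps (ii) and (iii) are similarly underspecified: ``local density'' is never defined, and chord-sliding on a Hamilton cycle with $\delta(G)=k\ll n$ does not obviously yield all lengths in any nontrivial terminal segment $[n-t,n]$.

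The paper takes a completely different, self-contained route and never invokes Theorem~\ref{thm:mcdiarmid}. It works bottom-up rather than top-down: defining $\tilde C_\ell$ to be an $\ell$-cycle with one extra vertex attached to two consecutive cycle vertices, it proves that if $G$ contains a $\tilde C_\ell$ with $\ell<n-1$ then $G$ contains a $\tilde C_{\ell+1}$ or a $\tilde C_{\ell+2}$. Since each $\tilde C_\ell$ contains both $C_\ell$ and $C_{\ell+1}$, pancyclicity follows once a $\tilde C_3$ or $\tilde C_4$ is found to seed the recursion (this, together with the claim that a non-bipartite $G$ satisfying the hypothesis contains a triangle, is handled separately). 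The extension lemma is proved via a new gadget, the \emph{switch}: a path on $\ell+1$ vertices together with a vertex $x$ adjacent to a block of consecutive path vertices $t,\ldots,t+s$. One takes the switch with $t$ minimal (and then $s$ maximal) and performs rotation-extension; the bipartite-independence hypothesis is applied once, to produce a single edge between a shifted neighbourhood of one endpoint (of size $\ge b$) and a set of size $a$ derived from the other, and every such edge yields a $\tilde C_{\ell+1}$, a $\tilde C_{\ell+2}$, or a switch with smaller $t$, contradicting optimality. The pendant triangle is exactly what survives the rotation and becomes the pendant triangle of the next $\tilde C$; your top-down shortening scheme has no analogue of this device, and that is the missing idea.
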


\noindent Our proof is completely self-contained and relies on a novel variant of P\'osa's celebrated rotation-extension technique, which is used to extend paths and cycles in expanding graphs (see, e.g., \cite{posa1976hamiltonian}). Define the graph $\tilde C_\ell$, to be the cycle of length $\ell$ together with an additional vertex which is adjacent to two consecutive vertices on the cycle (thus forming a triangle with them).
For each $\ell\in[3,n-1]$, our goal is to either find a $\tilde C_\ell$ or a $\tilde C_{\ell+1}$, which is clearly enough to show pancyclicity. 
The proof is recursive in nature, as we will derive the existence of a $\tilde C_\ell$ or a $\tilde C_{\ell+1}$ from the existence of a $\tilde C_{\ell-1}$. In our setting, we would like to apply the rotation-extension technique to the $\tilde C_{\ell-1}$ with the additional requirement that the extended cycle preserves the attached triangle. However, this is not possible in general and from the existence of a $\tilde C_{\ell-1}$ we will in turn derive the existence of a gadget denoted as a \emph{switch}, which is a path with triangles attached to it, to which we can apply our rotation-extension technique. 
One of the key ideas is to consider the switch which is optimal with respect to how close the triangles are to the beginning of the path (see \Cref{def:path with triangle}).
The application of the rotation-extension technique to such an optimal switch will then  result in either a $\tilde C_\ell$, a $\tilde C_{\ell+1}$, or a better switch, contradicting the optimality of the original switch. The details are given in the next section.

\section{The proof}
We first recall the definition of $\tilde\alpha(G)$.
\begin{defn}
For a graph $G$, let $\tilde\alpha(G)$ denote the minimal number $k$, such that there exist positive integers $a$ and $b$ with $a+b=k+1$, such that between any two disjoint sets $A,B\subseteq V(G)$ with $|A|=a$ and $|B|=b$, there is an edge between $A$ and $B$.
\end{defn} 
\noindent We will also need the following definition of a cycle which has one triangle attached to one of its edges.
\begin{defn}

Define the graph $\tilde C_\ell$, to be the cycle of length $\ell$ together with an additional vertex which is adjacent to two consecutive vertices on the cycle.
\end{defn}

\begin{proof}[ of Theorem~\ref{thm:main}]
Let $n :=|V(G)|$ and denote $k :=\tilde\alpha(G)$ and suppose that for $a\leq b$ and $a+b=k+1$, between every two disjoint vertex sets of sizes $a$ and $b$, there is an edge between them. By assumption, we have $\delta(G)\geq k$. Note also that as observed before $G$ has independence number $\alpha(G) \leq k$. Note further that since $\delta(G) \geq \alpha(G)$, we have that if $G$ is bipartite then it must be isomorphic to $K_{n/2,n/2}$. Finally, note also that $G$ is connected. Indeed, consider two non-adjacent vertices $u,v$; if their neighbourhoods intersect, there clearly exists a $uv$-path; otherwise, since both neighbourhoods have size at least $\delta(G) \geq k \geq a,b$, there exists an edge between them and thus also a $uv$-path.

\begin{claim}\label{claim:triangle}
Either $G$ contains a triangle or $G$ is bipartite.
\end{claim}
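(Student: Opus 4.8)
The plan is to show: if $G$ is triangle-free then $G$ is bipartite, which is exactly the claim. (By the remark preceding the claim, bipartiteness would in fact upgrade to $G=K_{n/2,n/2}$, but we do not need that here.)

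First I would collect the structural consequences of triangle-freeness. Each neighbourhood $N(v)$ is then independent, so $\alpha(G)\ge \delta(G)\ge k$; together with $\alpha(G)\le\tilde\alpha(G)=k$ this gives $\alpha(G)=k$, and since $k\le\deg(v)=|N(v)|\le\alpha(G)=k$ the graph is $k$-regular with every $N(v)$ a maximum independent set. For $x\in N(v)$ one has $N(x)\subseteq V\setminus N(v)$, so $n\ge 2k$; and if $n=2k$ then $N(x)=V\setminus N(v)$ for every $x\in N(v)$, which immediately forces $G=K_{k,k}$. So I may assume $n\ge 2k+1$ and must rule out $G$ being non-bipartite.

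The key step — and the part I expect to take the most thought — is to distil from the hypothesis the clean statement that \emph{any two distinct non-adjacent vertices have at least $b$ common neighbours}. To prove it, suppose $v$ and $z$ are non-adjacent with fewer than $b$ common neighbours. Then $N(v)\setminus N(z)$ has at least $k-(b-1)=a$ vertices; choose an $a$-subset $A$ of it. Since $A\subseteq N(v)$ and $N(v)$ is independent, the neighbourhood $N(A)$ is contained in $V\setminus N(v)$, and moreover $z\notin N(A)$ because $A$ avoids $N(z)$; hence the closed neighbourhood $N[A]=A\cup N(A)$ has at most $a+(n-k-1)=n-b$ vertices, so $V\setminus N[A]$ has at least $b$ of them. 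Then $A$ together with any $b$-subset of $V\setminus N[A]$ are disjoint with no edge between them, contradicting $\tilde\alpha(G)=k$.

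Finally I would finish with a symmetric-difference computation. For non-adjacent $x,y$ we get $|N(x)\,\triangle\,N(y)| = 2k-2|N(x)\cap N(y)| \le 2(a-1)$, while for adjacent $x,y$ triangle-freeness gives $N(x)\cap N(y)=\emptyset$, hence $|N(x)\,\triangle\,N(y)| = 2k$. If $G$ were non-bipartite, pick a shortest odd cycle $C=c_1c_2\cdots c_{2t+1}c_1$; a chord would produce a shorter odd cycle or a triangle, so $C$ is induced, and triangle-freeness forces $2t+1\ge 5$. Then some $c_i$ with $4\le i\le 2t$ is, on the induced cycle $C$, non-adjacent to both $c_1$ and $c_2$, so by the triangle inequality for the symmetric difference $2k = |N(c_1)\,\triangle\,N(c_2)| \le |N(c_1)\,\triangle\,N(c_i)| + |N(c_i)\,\triangle\,N(c_2)| \le 4(a-1)$, i.e.\ $k\le 2(a-1)$; but $a\le b$ and $a+b=k+1$ give $a\le(k+1)/2$ and hence $2(a-1)\le k-1$, a contradiction. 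Therefore $G$ is bipartite, which proves the claim. Everything beyond isolating the ``$\ge b$ common neighbours'' bound and pairing it with the symmetric-difference trick along an induced odd cycle is routine.
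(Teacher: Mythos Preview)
Your proof is correct and follows essentially the same route as the paper's: both establish $k$-regularity with independent neighbourhoods, prove that any two non-adjacent vertices share at least $b$ common neighbours, and then derive a contradiction from an induced odd cycle of length $\ge 5$. The only real difference is in the last step: the paper picks vertices $x,y,z$ on the cycle with $yz$ an edge and $x$ non-adjacent to both, then uses pigeonhole (each of $y,z$ has $\ge b>|N(x)|/2$ neighbours inside $N(x)$) to produce a common neighbour of $y$ and $z$, yielding a triangle; you instead bound $|N(c_1)\triangle N(c_2)|$ via the symmetric-difference triangle inequality through a third cycle vertex. Both endings are one-line consequences of the common-neighbour lemma, so the approaches are equivalent in substance. (Your separate treatment of the case $n=2k$ is harmless but unnecessary: the rest of your argument already covers it.)
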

\begin{proof}
For sake of contradiction, suppose it does not contain a triangle nor it is bipartite and consider any vertex $v\in G$. If its neighbourhood is of size at least $k+1$, then as observed above it contains an edge, which together with $v$ creates a triangle. Therefore, every vertex has degree $k$ and its neighbourhood does not contain an edge. 

Furthermore, observe that every two non-adjacent vertices $u,v$ must have at least $b \geq \frac{k+1}{2}$ common neighbours.
Indeed, suppose $u$ has less than $b$ neighbours in $N(v)$ and consider a set $S \subseteq N(v) \cup \{u\}$ of size precisely $b$ which contains $u$ and all of its neighbours in $N(v)$, and possibly some other vertices in $N(v)$.
Now, by the assumption on the graph, there is an edge between $S$ and $N(v)\setminus S$, since the sizes of these are $b$ and $a$ respectively. However, this is a contradiction, since there are no edges between $u$ and $N(v)\setminus S$ and any edge contained in $N(v)$ creates a triangle.

To finish, recall that $G$ is non-bipartite and thus contains an odd cycle, which is then not a triangle. Further, it contains an induced odd cycle - indeed, the shortest odd cycle must be induced. Since this cycle is not a triangle, it must then contain three vertices $x,y,z$ such that $yz$ is an edge and $y,z$ are not adjacent to $x$. Since by the previous paragraph we have that both $y,z$ have at least $\frac{k+1}{2} > |N(x)|/2$ neighbours in $N(x)$, they have a common neighbour (in $N(x)$), which together with the edge $yz$ creates a triangle, a contradiction.
\end{proof} 

\noindent We will now continue with the proof \emph{assuming that} $b\geq 3$, and in the end we will deal with the few simple remaining cases when $b\leq 2$.
So assume $G$ is not isomorphic to $K_{n/2,n/2}$, so it is not bipartite.
Note that $G$ contains a $\tilde C_3$ or a $\tilde C_4$. Indeed, we get this by considering the neighbourhoods of any two vertices lying on a triangle $xyz$, whose existence is guaranteed by the previous claim; if the neighbourhoods intersect in a vertex outside of the triangle, this gives $\tilde C_3$. Otherwise, between $N(x)-\{y,z\}$ which is of size at least $k-2\geq a$, and the set $N(y)-\{x\}+\{y\}$ which is of size at least $k\geq b$, there is an edge which gives the required $\tilde C_3 $ or $\tilde C_4 $. Theorem \ref{thm:main} will now follow from the following Lemma.
\begin{lem}\label{lem:extension}
If $G$ contains a copy of $\tilde C_\ell$ for some $\ell<n-1$, then it also contains $\tilde C_{\ell+1}$ or $\tilde C_{\ell+2}$. 
\end{lem}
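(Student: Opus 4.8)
The plan is to prove Lemma~\ref{lem:extension} by a rotation--extension argument in the spirit of Pósa, carried out not on $\tilde C_\ell$ directly but on an auxiliary gadget — a \emph{switch}, that is, a path with one or more triangles attached (cf.\ \Cref{def:path with triangle}) — so that an attached triangle can be forced to survive the rotations and reappear on the final, longer cycle. The only consequence of the hypothesis I would use, beyond the connectedness already noted, is the following expansion property: since $a+b=k+1$ and any two disjoint sets of sizes $a$ and $b$ are joined by an edge, every nonempty $S\subseteq V(G)$ with $|S|\ge a$ has at most $b-1$ vertices outside its closed neighbourhood, and every $S$ with $|S|\ge b$ at most $a-1$. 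Combined with the standard rotation bookkeeping — if $S$ is the set of left endpoints of the paths obtained from a path $P$ by rotations fixing the right endpoint, and none of these can be extended by an edge leaving the path, then $N(S)\subseteq V(P)$ and $N(S)$ lies in a set of size $O(|S|)$ — this expansion forces, at each stage, that either the path extends, or a reachable set is large, or a chord closes up a cycle.

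\textbf{From $\tilde C_\ell$ to an extremal switch.} Write the given copy of $\tilde C_\ell$ as a cycle $c_0c_1\cdots c_{\ell-1}c_0$ with an apex $w$ satisfying $w\sim c_0,c_1$. Deleting one cycle edge far from $\{c_0,c_1\}$ turns this into a path on $c_0,\dots,c_{\ell-1}$ that still carries the triangle $wc_0c_1$ as an attached triangle, i.e.\ a switch on the same $\ell+1$ vertices. Among all switches of $G$ on the relevant number of vertices I would then fix one, $\mathcal{S}$, that is \emph{optimal} in the sense that its attached triangle(s) lie as close as possible to the start of its path; this extremal choice is the heart of the argument.

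\textbf{Rotation--extension on $\mathcal{S}$.} Now run rotations, and extensions, on the path of $\mathcal{S}$ from its free end, keeping the other end and the attached triangle fixed. One of three things happens. (i) Some rotated path extends by absorbing a vertex outside its current vertex set; after one or two such absorptions, and then a chord between two reachable endpoints — such a chord exists by the expansion property, and this is one place where the hypothesis $\ell<n-1$ is used, to ensure a vertex outside the current structure is available to be absorbed — we obtain a cycle of length exactly $\ell+1$ or $\ell+2$ on which the attached triangle survives, that is, a $\tilde C_{\ell+1}$ or a $\tilde C_{\ell+2}$. (ii) A rotation interacts with a triangle of $\mathcal{S}$ so as to produce a new switch whose triangle has moved strictly closer to the start of the path, contradicting the optimality of $\mathcal{S}$. (iii) Every rotation leaves the attached triangle(s) untouched and no extension is possible, so that every reachable endpoint has its whole neighbourhood on the path, and then the expansion property again supplies a closing chord and hence the desired $\tilde C_{\ell+1}$ or $\tilde C_{\ell+2}$.

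I expect the second part — the rotation--extension on the extremal switch — to be the main obstacle: one must choose the definition of a switch and of its optimality measure precisely so that \emph{every} rotation falls into case (i), (ii) or (iii), and so that the closing chord lands on length exactly $\ell+1$ or $\ell+2$ rather than something larger. This length control is exactly what separates the present statement from the Hamiltonicity result of Theorem~\ref{thm:mcdiarmid}, where a longest path is used and no length is prescribed. The cases $b\le 2$ set aside before the lemma are then disposed of directly, using that they force $k\le 3$, so that $G$ is very constrained (for instance $a=1$ already forces $\delta(G)\ge n-k$), and the conclusion follows by inspection.
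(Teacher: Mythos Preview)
Your proposal identifies the right high-level architecture --- work with an extremal switch whose attached triangle is as close to the start of the path as possible, and argue that any attempt to improve it either yields a better switch (contradiction) or produces the desired $\tilde C_{\ell+1}$ or $\tilde C_{\ell+2}$ --- and this is indeed the paper's strategy. However, what you have written is a plan, not a proof: you explicitly flag the crux (``one must choose the definition of a switch and of its optimality measure precisely so that \emph{every} rotation falls into case (i), (ii) or (iii), and so that the closing chord lands on length exactly $\ell+1$ or $\ell+2$'') and then leave it unresolved. That crux is the whole lemma.

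More concretely, the mechanism you describe in cases (i) and (iii) --- iterate P\'osa rotations to build a large set of reachable endpoints, then invoke expansion to find a closing chord --- does not give length control. Standard rotation bookkeeping tells you a cycle exists through the vertex set of the path, but not that it has length exactly $\ell+1$ or $\ell+2$; and your ``absorb one or two outside vertices, then close'' in (i) is not obviously compatible with keeping the triangle attached. The paper sidesteps this entirely: it never iterates rotations. It first uses connectivity (and $\ell<n-1$) to attach one new vertex to $\tilde C_\ell$, obtaining a switch whose path already has exactly $\ell+1$ vertices; then, for the optimal such switch, it takes the last $a$ neighbours $S$ of the right endpoint $p_2$ along the path, the neighbours $T$ of $p_1$ (or of $p_2$) in the complementary region, and applies the $(a,b)$-property \emph{once} to find a single edge between suitable one-step shifts $S^+$, $T^-$, $T^+$, etc. Each such edge, after a short but genuine case analysis on the position $t$ of the triangle ($t=1$, $t=2$, $t>2$) and on which endpoint's neighbours dominate which interval, either closes a cycle of length exactly $\ell+1$ or $\ell+2$ carrying the triangle, or produces a switch with strictly smaller $t$. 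Your initial switch, obtained by deleting a cycle edge of $\tilde C_\ell$, has only $\ell+1$ vertices rather than $\ell+2$, so it is one vertex short of what is needed for this one-shot argument to hit the right length.
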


\noindent Indeed, to finish the proof, note that since $G$ contains $\tilde C_3$ or $\tilde C_4$, we can iteratively apply the lemma to get a family $\{\tilde C_\ell\mid \ell\in I\}$ of graphs which are all contained in $G$, such that for every pair $(i,i+1)$ of consecutive integers in $[3,n]$, one of the two is in $I$. Since each $\Tilde{C}_\ell$ contains both $C_\ell$ and $C_{\ell+1}$ we are done.

\begin{proof}[ of Lemma~\ref{lem:extension}]
Suppose for sake of contradiction that $G$ contains a $\tilde C_\ell$, but does not contain a $\tilde C_{\ell+1}$, nor a $\tilde C_{\ell+2}$. 
The central gadget we use in our proof is given by the following definition.
\begin{defn}\label{def:path with triangle} 
A $(t,s)$\emph{-switch} in $G$ is a subgraph $R$ which consists of a path $P=(1,2,\ldots,\ell+1)$ together with the vertex $x$ adjacent to vertices $t,t+1,\ldots,t+s$ with $t,s\geq 1$. 
We also write $(t,\cdot)$-switch to denote a switch for which the $s$ is not specified.

\end{defn}

\begin{figure}[ht]
    \centering
    \begin{tikzpicture}[scale=1.05,main node/.style={circle,draw,color=black,fill=black,inner sep=0pt,minimum width=3pt}]
        \tikzset{cross/.style={cross out, draw=black, fill=none, minimum size=2*(#1-\pgflinewidth), inner sep=0pt, outer sep=0pt}, cross/.default={2pt}}
	\tikzset{rectangle/.append style={draw=brown, ultra thick, fill=red!30}}

\node[main node] (a) at (0,0) [label=below:$p_1\equiv1$]{};
\node[main node] (b) at (15,0) [label=below:$p_2\equiv \ell+1$]{};
\node[main node] (x) at (4.5,1.5)[label=above:$x$]{};
\node[main node] (t1) at (4,0)[label=below:$t$]{};
\node[main node] (t2) at (5,0)[label=below:$t{+}1$]{};
\node[main node] (t3) at (6,0)[label=below:$t{+}2$]{};
\node[main node] (t4) at (7,0)[label=below:$t{+}3$]{};
     \draw[line width= 1 pt] (a) to [bend left=0](b);
     \draw[line width= 1 pt] (t1) to [bend left=0](x);
     \draw[line width= 1 pt] (t2) to [bend left=0](x);
    \draw[line width= 1 pt] (t3) to [bend left=0](x);
    \draw[line width= 1 pt] (t4) to [bend left=0](x);
	    
	    
	    

    \end{tikzpicture}
    \caption{A $(t,3)$-switch.}
    \label{fig:special set}
\end{figure}
\noindent Note first that a $(t,s)$-switch exists for some $t$ and $s$. Indeed, since we have a $\tilde C_{\ell}$ and $G$ is connected, there is an edge between $\tilde C_\ell$ and a vertex $v$ outside of the $\tilde C_{\ell}$ - this evidently produces a $(t,1)$-switch whose path starts at $v$.

Let us now take a \emph{$(t,s)$-switch such that $t$ is minimized and $s$ is maximized with respect to $t$} and consider the following ordering of its vertices: $\pi=(1,2,\ldots t,x,t+1,t+2\ldots \ell+1)$, i.e. the natural order of the path $P$ with $x$ inserted between $t$ and $t+1$. Denote $p_1 :=1$ and $p_2 :=\ell+1$.
Given $v\in V(R)$ we define $v^+$ to be the vertex which comes after $v$ in the ordering $\pi$.
Given a set of vertices $T\subset V(R)$, we define $T^+$ to be the vertices obtained by shifting $T$ to the right by one, i.e., $T^+=\{v^+\mid v\in T\}$; similarly define $T^{-}$. We start with the following simple claim.

\begin{claim}\label{cl:neighbours outside}
If $t>1$, then $p_2$ has no neighbours outside of $V(R)$. If $t=1$ then $p_2$ has less than $a$ neighbours outside of $V(R)$.
\end{claim}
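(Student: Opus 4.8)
The plan is to show that a neighbour of $p_2$ outside $V(R)$ is (almost) impossible, because it would let us either build a switch that beats $R$ in the ordering ``first minimize $t$, then maximize $s$'', or assemble a forbidden $\tilde C_{\ell+1}$ or $\tilde C_{\ell+2}$.

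\emph{Case $t>1$.} Here I would argue directly from the extremality of $R$. Suppose $v\in N(p_2)\setminus V(R)$. Since $t>1$, the endpoint $p_1=1$ is not incident to $x$, so we may delete it from the path $P$ and append $v$ after $p_2$, obtaining the path $Q=(2,3,\ldots,\ell+1,v)$ on $\ell+1$ vertices; the vertex $x$ is still adjacent to $t,t+1,\ldots,t+s$, all of which lie in $\{2,\ldots,\ell+1\}\subseteq V(Q)$. Re-indexing the vertices of $Q$ as $1,\ldots,\ell+1$ along the path turns $Q\cup\{x\}$ into a $(t-1,s)$-switch, and since $t-1\ge 1$ this is a legitimate switch with a strictly smaller first parameter than $R$ -- contradicting the minimality of $t$. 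Hence $p_2$ has no neighbour outside $V(R)$.

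\emph{Case $t=1$.} Now $p_1$ lies on a triangle attached to $x$, so trimming it no longer yields an improving switch, and instead I would invoke the bipartite independence hypothesis. Assume for contradiction that $p_2$ has at least $a$ neighbours outside $V(R)$, and fix a set $A$ of exactly $a$ of them. First, no $v\in A$ is adjacent to $p_1$: otherwise $(p_1,2,3,\ldots,\ell+1,v,p_1)$ is a cycle of length $\ell+2$ on which $p_1$ and $2$ are consecutive, so, $x$ being adjacent to both of them (as $t=1$ and $s\ge 1$) and lying off the cycle, we would have a $\tilde C_{\ell+2}$. Similarly no $v\in A$ is adjacent to $x$: using the edge $x2$, the cycle $(2,3,\ldots,\ell+1,v,x,2)$ has length $\ell+2$, contains the consecutive pair $x,2$, and $p_1$ is adjacent to both while lying off it, again a $\tilde C_{\ell+2}$. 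Thus $\{p_1,x\}$ is disjoint from $A$ and sends no edge to it. It then suffices to extend $\{p_1,x\}$ to a set $B$ of size $b$, still disjoint from $A$ and with no edge to $A$: the pair $(A,B)$ would contradict the defining property of $\tilde\alpha(G)=k$, since $|A|+|B|=a+b=k+1$. Finding the remaining $b-2$ vertices of $B$ is the step I expect to be the main obstacle: in contrast to the case $t>1$, where extremality closes the argument immediately, here one must squeeze the contradiction out of the bipartite independence number, which forces a careful analysis of the edges between $A$ and $V(R)$ (e.g.\ via short rotations of the path $P$) to certify that sufficiently many vertices are non-adjacent to $A$.
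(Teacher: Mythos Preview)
Your treatment of the case $t>1$ is correct and matches the paper exactly.

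For $t=1$, however, your argument is genuinely incomplete, and the direction you are heading in is not the one that works. You are trying to use the bipartite independence condition \emph{negatively}: build a set $B\supseteq\{p_1,x\}$ of size $b$ with no edge to $A$ and derive a contradiction. As you yourself note, producing the remaining $b-2$ vertices is the crux, and there is no evident reason why enough vertices of $R$ should be non-adjacent to all of $A$; an arbitrary vertex $m\in P$ adjacent to some $v\in A$ does not by itself yield a forbidden $\tilde C_{\ell+1}$ or $\tilde C_{\ell+2}$, so you cannot simply rule such edges out.

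The paper instead uses the condition \emph{positively}: it engineers two disjoint sets of sizes $a$ and $b$ so that \emph{any} edge between them closes a forbidden cycle. Concretely, it first proves the auxiliary fact that $p_1$ has fewer than $a$ neighbours outside $V(R)$ (by looking at the neighbourhood of the penultimate vertex $p_2-1$ and shifting it forward along $\pi$: any edge from $N(p_1)\setminus V(R)$ into this shifted set produces a $\tilde C_{\ell+1}$ or $\tilde C_{\ell+2}$). Once that is known, $p_1$ has at least $k-(a-1)=b$ neighbours $T$ inside $V(R)$; shifting them backward along $\pi$ to $T^-$ and pairing $T^-$ against the $\ge a$ outside neighbours $B$ of $p_2$, the guaranteed edge between $T^-$ and $B$ always yields a $\tilde C_{\ell+2}$. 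The missing idea in your approach is precisely this two-step rotation: bound the outside degree of $p_1$ first, then use the resulting abundance of inside neighbours of $p_1$ (shifted) as the set of size $b$.
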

\begin{proof}
First, note that if $t>1$ and $p_2$ has a neighbour outside of $R$, we could add that neighbour to $R$, and remove $p_1$ from $R$, thus obtaining a $(t-1,s)$-switch, a contradiction. Now, suppose that $t=1$. Observe that $p_1$ has less than $a$ neighbours outside of $R$. Indeed, let $A = N(p_1) \setminus V(R)$ and let $T$ be the set of neighbours of $p_2-1$ in $R-\{p_2\}$, and let $T_{out}$ be the set of neighbours of $p_2-1$ outside of $R-\{p_2\}$. Then, the set $T_{out}\cup T^+$ is of size at least $\delta(G)\geq k \geq b$ and does not contain any vertices in $A$, since this creates a $\tilde C_{\ell+1}$. If $|A|\geq a$, then there is an edge $(i,j)$ between $A$ and  $T_{out}\cup T^+$, which creates either a $\tilde C_{\ell+1}$ or a $\tilde C_{\ell+2}$. 
Indeed, if $j\in T^{out}$ then obviously we get a $\tilde C_{\ell+2}$, if $j\in T^{+}\setminus\{2,x\}$ then we get a $\tilde C_{\ell+1}$ as in Fig.~\ref{fig:sub2}, and if $j=2$ then we get a $\tilde C_{\ell+1}$ whose triangle contains $1,i,2$, while if $j=x$ then we get a $\tilde C_{\ell+1}$ whose triangle contains $1,i, x$.
Hence, $|A| < a$.

To conclude, suppose that $p_2$ has at least $a$ neighbours outside $R$ and denote the set of these by $B$.
Since $p_1$ has less than $a$ neighbours outside $V(R)$ by the previous paragraph, we can take a set $T$ of at least $k-(a-1)\geq b$ neighbours of $p_1$ in $V(R)$. Hence, there is an edge $(i,j)$ between $T^-$ and $B$, which creates a $\tilde C_{\ell+2}$ (this is easy to see when $i=p_1$ or $i=x$; otherwise we get the same situation as illustrated in Fig.~\ref{fig:IIsub2}), a contradiction.
\end{proof}

\begin{claim}\label{cl:neighbours before t}
If $t>1$, then $p_2$ has no neighbours $t_0$ with $t_0<t$.
\end{claim}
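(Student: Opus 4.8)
The plan is to argue by contradiction: suppose $t>1$ and $p_2$ has a neighbour $t_0$ with $t_0<t$, and then use the chord $t_0p_2$ to reroute the path $P$ into a new switch whose first parameter is strictly smaller than $t$, contradicting the extremal choice of our switch. Concretely, I would form the path
$$Q=\big(1,2,\ldots,t_0,\ p_2,\ p_2-1,\ldots,t_0+1\big),$$
i.e.\ follow $P$ from $1$ to $t_0$, then take the edge $t_0p_2$, then traverse $P$ backwards from $p_2=\ell+1$ down to $t_0+1$. Since $t_0<t\le\ell$ (the latter because $t+s\le\ell+1$ and $s\ge1$), the chord $t_0p_2$ is distinct from all edges of $P$, so $Q$ is a genuine path in $G$ on the vertex set $\{1,\ldots,\ell+1\}=V(R)\setminus\{x\}$ with endpoints $1$ and $t_0+1$.

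Next I would re-index $Q$ starting from the endpoint $t_0+1$, so that it reads $t_0+1,t_0+2,\ldots,\ell,\ell+1,t_0,t_0-1,\ldots,1$. Every vertex among $t,t+1,\ldots,t+s$ to which $x$ is adjacent satisfies $>t_0$, hence lies in the initial block $t_0+1,\ldots,\ell+1$ of $Q$, where these vertices appear in increasing order; thus they occupy the consecutive positions $t-t_0,\,t-t_0+1,\ldots,\,t-t_0+s$ of $Q$. Consequently $x$ together with $Q$ forms a $(t-t_0,s)$-switch — or possibly one with an even smaller first parameter, if $x$ happens to be adjacent to further consecutive vertices of $Q$, which only helps. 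Because $1\le t_0<t$, the first parameter of this new switch lies in $[1,t-1]$, contradicting the minimality of $t$.

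The only point that needs a little care is the boundary case $t+s=\ell+1$, i.e.\ when $x$ is adjacent to the endpoint $p_2$ of $P$: then in the re-indexed $Q$ the vertex $p_2=\ell+1$ sits immediately after the block $t_0+1,\ldots,\ell$, at position $\ell-t_0+1=(t+s)-t_0$, so the run $t,\ldots,t+s$ still corresponds to a set of consecutive positions and the argument goes through unchanged. Apart from this label bookkeeping, there is no real obstacle here; the claim is a short consequence of the extremal choice of the switch, and is in the same spirit as the rerouting used in \Cref{cl:neighbours outside}.
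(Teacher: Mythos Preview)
Your proof is correct and follows exactly the same approach as the paper: form the rerouted path $(t_0+1,\ldots,\ell+1,t_0,\ldots,1)$ via the chord $t_0p_2$ and observe that $x$'s neighbours $t,\ldots,t+s$ now occupy positions $t-t_0,\ldots,t+s-t_0$, yielding a $(t-t_0,s)$-switch and contradicting the minimality of $t$. The paper states this in a single sentence with a figure; you have simply written out the bookkeeping explicitly.
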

\begin{proof}
Otherwise, their exists a $(t-t_0,s)$-switch, as depicted in Fig.~\ref{fig:neigh before t}, thus contradicting the optimality of $R$.
\end{proof}

\begin{figure}[ht]
    \centering
    \begin{tikzpicture}[scale=1.05,main node/.style={circle,draw,color=black,fill=black,inner sep=0pt,minimum width=3pt}]
        \tikzset{cross/.style={cross out, draw=black, fill=none, minimum size=2*(#1-\pgflinewidth), inner sep=0pt, outer sep=0pt}, cross/.default={2pt}}
	\tikzset{rectangle/.append style={draw=brown, ultra thick, fill=red!30}}

\node[main node] (a) at (0,0) [label=below:$p_1$]{};
\node[main node] (b) at (15,0) [label=below:$p_2$]{};
\node[main node] (x) at (4.5,1.5)[label=above:$x$]{};
\node[main node] (t1) at (4,0)[label=below:$t$]{};
\node[main node] (t2) at (5,0)[label=below:$t{+}1$]{};
\node[main node] (t3) at (6,0)[label=below:$t{+}2$]{};
\node[main node] (t4) at (7,0)[label=below:$t{+}3$]{};
\node[main node] (t0) at (1.5,0)[label=below:$t_0$]{};
\node[main node] (t01) at (2.5,0)[label=above:$t_0{+}1$]{};

     \draw[line width= 1 pt] (a) to [bend left=0](b);
     \draw[line width= 1 pt] (t1) to [bend left=0](x);
     \draw[line width= 1 pt] (t2) to [bend left=0](x);
    \draw[line width= 1 pt] (t3) to [bend left=0](x);
    \draw[line width= 1 pt] (t4) to [bend left=0](x);
     \draw[line width= 1 pt] (b) to [bend left=20](t0);

     \draw[red, line width= 4 pt, opacity=0.4] (t01) to [bend left=0](b);
     \draw[red, line width= 4 pt, opacity=0.4] (b) to [bend left=20](t0);
     \draw[red, line width= 4 pt, opacity=0.4] (a) to [bend left=0](t0);
    
    \end{tikzpicture}
    \caption{If $p_2$ has a neighbour before $t$ then we can use the red path to create a $(t_0,s)$-switch.}
    \label{fig:neigh before t}
\end{figure}

\noindent Now, define the set $S$ to consist of the last $a$ neighbours of $p_2$ in $\pi$. Observe that by Claim~\ref{cl:neighbours outside} this set exists and as usual, let $\min (S)$ denote the smallest element of $S$ in the ordering $\pi$. We then have the following.
\begin{claim}\label{cl:min S}
$\min(S)\geq t+1$.
\end{claim}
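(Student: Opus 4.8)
The plan is to first recast the claim as a counting statement. In the ordering $\pi=(1,2,\ldots,t,x,t+1,\ldots,\ell+1)$, the vertices occupying positions $\geq t+1$ are exactly $\{t+1,t+2,\ldots,\ell+1\}$, and this set is a suffix of $\pi$. Since $S$ consists of the last $a$ neighbours of $p_2$ in $\pi$ and $p_2=\ell+1$ is not adjacent to itself, the inequality $\min(S)\geq t+1$ is equivalent to the statement that \emph{$p_2$ has at least $a$ neighbours in $\{t+1,\ldots,\ell\}$}. So the remainder of the proof verifies exactly this, splitting into the cases $t>1$ and $t=1$.

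For $t>1$ this is immediate from the two preceding claims. By \Cref{cl:neighbours outside} the vertex $p_2$ has no neighbour outside $V(R)$, and by \Cref{cl:neighbours before t} it has no neighbour among $\{1,\ldots,t-1\}$; hence all of its $\deg(p_2)\geq\delta(G)\geq k=a+b-1$ neighbours lie in $\{t,x\}\cup\{t+1,\ldots,\ell\}$, so at least $(a+b-1)-2=a+b-3$ of them lie in $\{t+1,\ldots,\ell\}$, and $a+b-3\geq a$ because we are in the regime $b\geq 3$.

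For $t=1$ the relevant suffix is $\{2,3,\ldots,\ell\}$ (here $\pi=(1,x,2,\ldots,\ell+1)$), and \Cref{cl:neighbours outside} only gives that $p_2$ has at most $a-1$ neighbours outside $V(R)$. The extra ingredient I would establish is that $p_2$ is adjacent to neither $p_1$ nor $x$. If $p_1\sim p_2$, then $P$ together with the edge $p_1p_2$ is a cycle of length $\ell+1$ on which $x$ — adjacent to the consecutive vertices $1=t$ and $2=t+1$, using $s\geq 1$ — forms an attached triangle, i.e.\ a $\tilde C_{\ell+1}$, contradicting our standing assumption. If $x\sim p_2$, then $(1,2,\ldots,\ell+1,x,1)$ is a cycle of length $\ell+2$ whose chord between $x$ and $2$ (recall $x\sim 2$) splits off a triangle $x,1,2$ from a cycle $(x,2,3,\ldots,\ell+1)$ of length $\ell+1$; the vertex $1$ is adjacent to the consecutive vertices $x$ and $2$ of this cycle, so we again have a $\tilde C_{\ell+1}$, a contradiction. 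Granting this, at least $\deg(p_2)-(a-1)\geq(a+b-1)-(a-1)=b\geq a$ neighbours of $p_2$ land in $\{2,\ldots,\ell\}=\{t+1,\ldots,\ell\}$, which is exactly what is needed.

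I expect the only delicate point to be the $t=1$ case — in particular, carefully checking that the two small configurations ($p_1\sim p_2$ and $x\sim p_2$) really produce a forbidden $\tilde C_{\ell+1}$, making sure in each case that the attached triangle sits on two \emph{consecutive} vertices of the cycle in question. The $t>1$ case is pure arithmetic once \Cref{cl:neighbours outside} and \Cref{cl:neighbours before t} are in hand. Since $\ell\geq 3$ throughout the argument, all the cycles appearing above are genuine, so no degenerate cases arise.
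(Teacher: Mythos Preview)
Your proof is correct and follows essentially the same approach as the paper: for $t>1$ you combine \Cref{cl:neighbours outside} and \Cref{cl:neighbours before t} with the arithmetic $k-2\geq a$ (using $b\geq 3$), and for $t=1$ you rule out $p_2\sim p_1$ and $p_2\sim x$ by exhibiting the resulting $\tilde C_{\ell+1}$, exactly as the paper does (only more explicitly). One minor slip: in $\pi$ the vertex at position $t+1$ is $x$, not $t+1$, so your opening sentence about ``positions $\geq t+1$'' should really say positions $\geq t+2$; this does not affect the argument, since you correctly work with the suffix $\{t+1,\ldots,\ell\}$ thereafter.
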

\begin{proof}
If $t=1$, note that $p_2$ is not adjacent to any of $1$ or $x$ since any such case would create a $\tilde C_{\ell+1}$, a contradiction. Therefore, $\min(S)\geq 2$. If $t>1$, then Claims~\ref{cl:neighbours outside}~and~\ref{cl:neighbours before t} imply that all of the at least $k$ neighbours of $p_2$ are in $V(R)$ and all of them are larger or equal than $t$ in $\pi$. Hence, at least $k-2 \geq a$ (recall that we are assuming that $b \geq 3$) neighbours are larger or equal than $t+1$ in $\pi$, which completes the proof.
\end{proof}
\noindent From now on, we will differentiate between two scenarios:
\begin{enumerate}[label=(\Alph*)]
    \item\label{case:p2} $p_1$ has less than $a$ neighbours in the interval $[\min(S)+1,p_2]$. 
    \item[] Then, denote by $T$ the set of neighbours of $p_1$ in $[p_1,\min(S)]$, and by $T_{out}$ the neighbours of $p_1$ outside of $V(R)$. Note that $|T| + |T_{out}|\geq k-(a-1)=b$.
    \item\label{case:p1} $p_1$ has at least $a$ neighbours in the interval $[\min(S)+1,p_2]$. 
    \item[] Then, denote this set of neighbours by $A$, denote by $T$ the set of neighbours of $p_2$ in $[p_1,\min(S)]$, by $T_{out}$ the set of neighbours of $p_2$ outside of $V(R)$. Note that by definition of $S$, $p_2$ has precisely $a-1$ neighbours in $[\min(S)+1,p_2]$ and so we have that $|T| + |T_{out}|\geq k-(a-1)=b$. Recall further that $T_{out}=\emptyset$ if $t>1$ by Claim~\ref{cl:neighbours outside}. 
\end{enumerate}

\noindent We will now consider a few cases, depending on the parameters $s$ and $t$. We will argue that besides the edges of $R$, there exist additional edges in $G[R]$ which would imply the existence of a better switch, or a copy of $\tilde C_{\ell+1}$ or $\tilde C_{\ell+2}$ in $G$, thus giving a contradiction. For example, note that $p_1$ and $p_2$ are not adjacent, since this would create a copy of $\tilde C_{\ell+1}$ in $G$. In the figures below, we give some more complex examples of edges which we may find in $G$. In the following subsections, we will consider each one of these situations and we recommend the reader to focus on the figures below only when they are referred to in the proof. We recommend reading case \ref{case:p2} in all sections first, and subsequently case \ref{case:p1} in all sections.
\begin{figure}[ht]
    \centering
    \begin{subfigure}{.5\textwidth}
    \begin{tikzpicture}[scale=1.1,main node/.style={circle,draw,color=black,fill=black,inner sep=0pt,minimum width=3pt}]
        \tikzset{cross/.style={cross out, draw=black, fill=none, minimum size=2*(#1-\pgflinewidth), inner sep=0pt, outer sep=0pt}, cross/.default={2pt}}
	\tikzset{rectangle/.append style={draw=brown, ultra thick, fill=red!30}}

\node[main node] (a) at (0,0) [label=below:$p_1$]{};
\node[main node] (b) at (7,0) [label=below:$p_2$]{};
\node[main node] (x) at (2,0.75)[label=above:$x$]{};
\node[main node] (t1) at (1.75,0)[label=below:$t$]{};
\node[main node] (t2) at (2.25,0)[]{};

\node[main node] (r1) at (4,0)[]{};
\node[main node] (r2) at (5.5,0)[]{};
     \draw[line width= 1 pt] (a) to [bend left=0](b);
     \draw[line width= 1 pt] (t1) to [bend left=0](x);
     \draw[line width= 1 pt] (t2) to [bend left=0](x);

        \draw[line width= 1 pt] (r1) to [bend left=30](a);
     \draw[line width= 1 pt] (r2) to [bend right=30](b);

\node[main node] (r1b) at (3.5,0)[]{};
\node[main node] (r2f) at (6,0)[]{};
  \draw[line width= 1 pt, dotted] (r1b) to [bend left=60](r2f);

     \draw[opacity=0.4,red, line width= 4 pt] (a) to [bend left=0](r1b);
     \draw[opacity=0.4,red, line width= 4 pt] (r1b) to [bend left=60](r2f);
     \draw[opacity=0.4,red, line width= 4 pt] (r2f) to [bend left=0](b);
     \draw[opacity=0.4,red, line width= 4 pt] (r2) to [bend right=30](b);
     \draw[opacity=0.4,red, line width= 4 pt] (r1) to [bend left=0](r2);
     \draw[opacity=0.4,red, line width= 4 pt] (r1) to [bend left=30](a);

    \end{tikzpicture}
     \caption{The vertex left to the neighbour of $p_1$ is adjacent \\ to the vertex to the right of the neighbour of $p_2$.}\label{fig:sub1}
   \end{subfigure}%
    \begin{subfigure}{.5\textwidth}
  \centering
\begin{tikzpicture}[scale=1.1,main node/.style={circle,draw,color=black,fill=black,inner sep=0pt,minimum width=3pt}]
        \tikzset{cross/.style={cross out, draw=black, fill=none, minimum size=2*(#1-\pgflinewidth), inner sep=0pt, outer sep=0pt}, cross/.default={2pt}}
	\tikzset{rectangle/.append style={draw=brown, ultra thick, fill=red!30}}

\node[main node] (a) at (0,0) [label=below:$p_1$]{};
\node[main node] (b) at (7,0) [label=below:$p_2$]{};
\node[main node] (x) at (2,0.75)[label=above:$x$]{};
\node[main node] (t1) at (1.75,0)[label=below:$t$]{};
\node[main node] (t2) at (2.25,0)[]{};

\node[main node] (r1) at (-0.25,1.6)[]{};
\node[main node] (r2) at (5.5,0)[]{};
     \draw[line width= 1 pt] (a) to [bend left=0](b);
     \draw[line width= 1 pt] (t1) to [bend left=0](x);
     \draw[line width= 1 pt] (t2) to [bend left=0](x);

     \draw[line width= 1 pt] (r1) to [bend left=0](a);
     \draw[line width= 1 pt] (r2) to [bend right=30](b);
   
     \draw[line width= 1 pt, dotted] (r1) to [bend left=10](r2f);

\node[main node] (r2f) at (6,0)[]{};

     \draw[opacity=0.4,red, line width= 4 pt] (a) to [bend left=0](r2);
     \draw[opacity=0.4,red, line width= 4 pt] (r1) to [bend left=10](r2f);
     \draw[opacity=0.4,red, line width= 4 pt] (r2f) to [bend left=0](b);
     \draw[opacity=0.4,red, line width= 4 pt] (r2) to [bend right=30](b);
     \draw[opacity=0.4,red, line width= 4 pt] (r1) to [bend left=0](a);

    \end{tikzpicture}
  \caption{A neighbour of $p_1$ outside $R$ is adjacent to the vertex to the right of the neighbour of $p_2$.}
  \label{fig:sub2}
\end{subfigure}
    \caption{In the first case we get a copy of $\tilde C_{\ell+1}$, and in the second a copy of $\tilde C_{\ell+2}$, whose respective cycles $C_{\ell+1}$ and $C_{\ell+2}$ are depicted in red.}
    \label{fig:first rotation}

\end{figure}

\begin{figure}[ht]
    \centering
    \begin{subfigure}{.5\textwidth}
    \begin{tikzpicture}[scale=1.1,main node/.style={circle,draw,color=black,fill=black,inner sep=0pt,minimum width=3pt}]
        \tikzset{cross/.style={cross out, draw=black, fill=none, minimum size=2*(#1-\pgflinewidth), inner sep=0pt, outer sep=0pt}, cross/.default={2pt}}
	\tikzset{rectangle/.append style={draw=brown, ultra thick, fill=red!30}}

\node[main node] (a) at (0,0) [label=below:$p_1$]{};
\node[main node] (b) at (7,0) [label=below:$p_2$]{};
\node[main node] (x) at (2,0.75)[label=above:$x$]{};
\node[main node] (t1) at (1.75,0)[label=below:$t$]{};
\node[main node] (t2) at (2.25,0)[]{};

\node[main node] (r1) at (4,0)[]{};
\node[main node] (r2) at (5.5,0)[]{};
     \draw[line width= 1 pt] (a) to [bend left=0](b);
     \draw[line width= 1 pt] (t1) to [bend left=0](x);
     \draw[line width= 1 pt] (t2) to [bend left=0](x);

        \draw[line width= 1 pt] (r2f) to [bend left=30](a);
     \draw[line width= 1 pt] (r1) to [bend left=30](b);

\node[main node] (r1b) at (3.5,0)[]{};
\node[main node] (r2f) at (6,0)[]{};
  \draw[line width= 1 pt, dotted] (r1b) to [bend right=40](r2);

     \draw[opacity=0.4,red, line width= 4 pt] (r2) to [bend left=0](r1);
     \draw[opacity=0.4,red, line width= 4 pt] (r1b) to [bend right=40](r2);
     \draw[opacity=0.4,red, line width= 4 pt] (r2f) to [bend left=0](b);
     \draw[opacity=0.4,red, line width= 4 pt] (r1) to [bend left=30](b);
     \draw[opacity=0.4,red, line width= 4 pt] (r1b) to [bend left=0](a);
     \draw[opacity=0.4,red, line width= 4 pt] (r2f) to [bend left=30](a);

    \end{tikzpicture}
     \caption{The vertex left to the neighbour of $p_1$ is adjacent \\ to the vertex to the left of the neighbour of $p_2$.} \label{fig:IIsub1}
   \end{subfigure}%
    \begin{subfigure}{.5\textwidth}
  \centering
\begin{tikzpicture}[scale=1.1,main node/.style={circle,draw,color=black,fill=black,inner sep=0pt,minimum width=3pt}]
        \tikzset{cross/.style={cross out, draw=black, fill=none, minimum size=2*(#1-\pgflinewidth), inner sep=0pt, outer sep=0pt}, cross/.default={2pt}}
	\tikzset{rectangle/.append style={draw=brown, ultra thick, fill=red!30}}

\node[main node] (a) at (0,0) [label=below:$p_1$]{};
\node[main node] (b) at (7,0) [label=below:$p_2$]{};
\node[main node] (x) at (2,0.75)[label=above:$x$]{};
\node[main node] (t1) at (1.75,0)[label=below:$t$]{};
\node[main node] (t2) at (2.25,0)[]{};

\node[main node] (r1) at (7.25,1.6)[]{};
\node[main node] (r2) at (5.5,0)[]{};
     \draw[line width= 1 pt] (a) to [bend left=0](b);
     \draw[line width= 1 pt] (t1) to [bend left=0](x);
     \draw[line width= 1 pt] (t2) to [bend left=0](x);

        \draw[line width= 1 pt] (r2f) to [bend left=30](a);
     \draw[line width= 1 pt] (r1) to [bend left=0](b);

\node[main node] (r2f) at (6,0)[]{};
  \draw[line width= 1 pt, dotted] (r1) to [bend right=10](r2);

     
     \draw[opacity=0.4,red, line width= 4 pt] (r1) to [bend right=10](r2);
     \draw[opacity=0.4,red, line width= 4 pt] (r2f) to [bend left=0](b);
     \draw[opacity=0.4,red, line width= 4 pt] (r1) to [bend left=0](b);
     \draw[opacity=0.4,red, line width= 4 pt] (r2) to [bend left=0](a);
     \draw[opacity=0.4,red, line width= 4 pt] (r2f) to [bend left=30](a);

    \end{tikzpicture}
  \caption{A neighbour of $p_2$ outside $R$ is adjacent to the vertex to the left of the neighbour of $p_1$.}
  \label{fig:IIsub2}
\end{subfigure}
    \caption{In the first case we get a copy of $\tilde C_{\ell+1}$, and in the second a copy of $\tilde C_{\ell+2}$, whose respective cycles $C_{\ell+1}$ and $C_{\ell+2}$ are depicted in red.}
    \label{fig:second rotation} 
\end{figure}

\begin{figure}[ht]
    \centering
    \begin{subfigure}{.5\textwidth}
    \begin{tikzpicture}[scale=1.1,main node/.style={circle,draw,color=black,fill=black,inner sep=0pt,minimum width=3pt}]
        \tikzset{cross/.style={cross out, draw=black, fill=none, minimum size=2*(#1-\pgflinewidth), inner sep=0pt, outer sep=0pt}, cross/.default={2pt}}
	\tikzset{rectangle/.append style={draw=brown, ultra thick, fill=red!30}}

\node[main node] (a) at (0,0) [label=below:$p_1$]{};
\node[main node] (b) at (7,0) [label=below:$p_2$]{};
\node[main node] (x) at (2,0.75)[label=above:$x$]{};
\node[main node] (t1) at (1.75,0)[label=below:$t$]{};
\node[main node] (t2) at (2.25,0)[]{};

\node[main node] (r1) at (4,0)[]{};
\node[main node] (r2) at (5.5,0)[]{};
\node[main node] (r1b) at (3.5,0)[]{};
    \node[main node] (r2f) at (6,0)[]{};
    \draw[line width= 1 pt] (a) to [bend right=0](b);
     \draw[line width= 1 pt] (t1) to [bend left=0](x);
     \draw[line width= 1 pt] (t2) to [bend left=0](x);
     \draw[line width= 1 pt] (a) to (b);
     \draw[line width= 1 pt] (a) to [bend left=52](r2);
     \draw[line width= 1 pt] (b) to [bend left=40](r1b);
    \draw[line width= 1 pt, dotted] (r1) to [bend right=40](r2f);

     
     \draw[opacity=0.4,red,line width= 4 pt] (a) to [bend left=52](r2);
     \draw[opacity=0.4,red, line width= 4 pt] (b) to [bend left=40](r1b);
      \draw[opacity=0.4,red, line width= 4 pt] (a) to [bend left=0](r1b);
       \draw[opacity=0.4,red, line width= 4 pt] (r1) to [bend left=0](r2);
        \draw[opacity=0.4,red, line width= 4 pt] (b) to [bend left=0](r2f);
     \draw[opacity=0.4,red, line width= 4 pt] (r1) to [bend right=40](r2f);

    \end{tikzpicture}
     \caption{The vertex right to the neighbour of $p_1$ is adjacent \\ to the vertex to the right of the neighbour of $p_2$.} \label{fig:IIIsub1}
   \end{subfigure}%
    \begin{subfigure}{.5\textwidth}
  \centering
\begin{tikzpicture}[scale=1.1,main node/.style={circle,draw,color=black,fill=black,inner sep=0pt,minimum width=3pt}]
        \tikzset{cross/.style={cross out, draw=black, fill=none, minimum size=2*(#1-\pgflinewidth), inner sep=0pt, outer sep=0pt}, cross/.default={2pt}}
	\tikzset{rectangle/.append style={draw=brown, ultra thick, fill=red!30}}

\node[main node] (a) at (0,0) [label=below:$p_1$]{};
\node[main node] (b) at (7,0) [label=below:$p_2$]{};
\node[main node] (x) at (2,0.75)[label=above:$x$]{};
\node[main node] (t1) at (1.75,0)[label=below:$t$]{};
\node[main node] (t2) at (2.25,0)[]{};
\node[main node] (s) at (0.5,0)[]{};

\node[main node] (r2) at (5.5,0)[]{};
\node[main node] (r2f) at (6,0)[]{};
     \draw[line width= 1 pt] (a) to [bend left=0](b);
     \draw[line width= 1 pt] (t1) to [bend left=0](x);
     \draw[line width= 1 pt] (t2) to [bend left=0](x);

        \draw[line width= 1 pt] (r2) to [bend left=30](a);
        \draw[line width= 1 pt] (r2f) to [bend right=52](a);


     
     \draw[opacity=0.4,red, line width= 4 pt] (r2f) to [bend right=52](a);
     \draw[opacity=0.4,red, line width= 4 pt] (r2f) to [bend left=0](b);
     \draw[opacity=0.4,red, line width= 4 pt] (s) to [bend left=0](r2);
     
     \draw[opacity=0.4,red, line width= 4 pt] (r2) to [bend left=30](a);

    \end{tikzpicture}
  \caption{$p_1$ is adjacent to two consecutive vertices on $P$ which are larger than $t+1$.}
  \label{fig:IIIsub2}
\end{subfigure}
    \caption{In the first case we get a copy of $\tilde C_{\ell+1}$, and in the second \\ a $(t-1,1)$-switch. The cycle $C_{\ell+1}$ and the switch are depicted in red.}
    \label{fig:third rotation} 
\end{figure}

\subsection{Triangle at the start: $t=1$}
\textbf{\ref{case:p2} holds:} 

Since $|T^-| + |T_{out}| = |T| + |T_{out}| \geq b$ and $|S^{+}| = |S| = a$, there is an edge between $T^-\cup T_{out}$ and $S^+$. This creates either a $\tilde C_{\ell+1}$ or a $\tilde C_{\ell+2}$ (see Fig.~\ref{fig:first rotation}), so we are done.
\\
\noindent \textbf{\ref{case:p1} holds:} 
Let $T_0=T-\{2\}$. Note that none of $p_1$ and $x$ are in $T$ since otherwise a $\Tilde C_{\ell+1}$ exists, and thus, $\min(T_0) > 2$. Now, $T_0^{-}\cup T_{out}\cup\{p_2\}$ is of size at least $b$ and so, there is an edge between this set and $A^{-}$, which always creates either a $\tilde C_{\ell+1}$ or a $\tilde C_{\ell+2}$ (see Fig.~\ref{fig:second rotation}).

\subsection{Triangle starts at the second vertex: $t=2$}

\textbf{\ref{case:p2} holds:} Note that $x\notin T$ since otherwise there would exist a $(1,\cdot)$-switch, starting with the triangle $(1,x,2)$. Consider then the set $T^-\cup T_{out}$ which is of size at least $b$. Between $T^-\cup T_{out}$ and $S^+$ (which is of size $a$) there is an edge $(i,j)$, which creates a $\tilde C_{\ell+1}$ or a $\tilde C_{\ell+2}$. Indeed, if $i\neq x$ then we can proceed as in Fig.~\ref{fig:sub1}, and if $i=x$ we proceed as below (Fig.~\ref{fig:t=2}).

\begin{figure}[ht]
    \centering
    \begin{tikzpicture}[scale=1.1,main node/.style={circle,draw,color=black,fill=black,inner sep=0pt,minimum width=3pt}]
        \tikzset{cross/.style={cross out, draw=black, fill=none, minimum size=2*(#1-\pgflinewidth), inner sep=0pt, outer sep=0pt}, cross/.default={2pt}}
	\tikzset{rectangle/.append style={draw=brown, ultra thick, fill=red!30}}

\node[main node] (a) at (-0.25,0) [label=below:$1$]{};
\node[main node] (b) at (7,0) [label=below:$p_2$]{};
\node[main node] (x) at (0.5,0.75)[label=above:$x$]{};
\node[main node] (t1) at (0.25,0)[label=above:]{};
\node[main node] (t2) at (0.75,0)[label=below:]{};

\node[main node] (r2) at (5.5,0)[]{};
     \draw[line width= 1 pt] (a) to [bend left=0](b);
     \draw[line width= 1 pt] (t1) to [bend left=0](x);
     \draw[line width= 1 pt] (t2) to [bend left=0](x);

     \draw[line width= 1 pt] (a) to [bend right=60](t2);
     \draw[line width= 1 pt] (r2) to [bend left=40](b);

\node[main node] (r2f) at (6,0)[]{};
  \draw[line width= 1 pt, dotted] (r2f) to [bend right=40](x);

     \draw[opacity=0.4,red, line width= 4 pt] (t1) to [bend left=0](r2);
     \draw[opacity=0.4,red, line width= 4 pt] (r2f) to [bend right=40](x);
     \draw[opacity=0.4,red, line width= 4 pt] (t1) to [bend left=0](x);
     \draw[opacity=0.4,red, line width= 4 pt] (r2) to [bend left=40](b);
     \draw[opacity=0.4,red, line width= 4 pt] (r2f) to [bend left=0](b);

    \end{tikzpicture}
     \caption{The case of $i = x$. We get a $\tilde C_{\ell+1}$ where the triangle consists of vertices $1,2,3$.} \label{fig:t=2}

\end{figure}

\noindent\textbf{\ref{case:p1} holds:} Recall that $|T|\geq b$ since $T_{out} = \emptyset$, and that $p_1 \notin T$. If not both edges $(x,p_2)$ and $(3,p_2)$ are present, then for each vertex (if it is in $T$) we can assign a unique vertex as follows: $2\rightarrow{}1$, $x\rightarrow{} 4$, and for all other vertices $v\rightarrow{} v+1$. If $T^*$ is the set of vertices assigned to $T$, then there is an edge $(i,j)$ between $T^*$ and $A^+$ (note that these are disjoint since $\min(A)>\max(T)=\min (S)\geq t+1=3$, implying that all elements in $A^+$ are larger than $\max({T^*})$). This edge always creates a copy of $\tilde C_{\ell+1}$ when $i\neq 1$ (as in Fig.~\ref{fig:IIIsub1}), and when $i=1$ then it creates a $(1,\cdot)$-switch (as in Fig.~\ref{fig:IIIsub2}), a contradiction.

Otherwise, if both $(x,p_2)$ and $(3,p_2)$ exist, then it must be that $s \geq 2$, since these edges can be used to form a $(t,2)$-switch (see Fig.~\ref{fig:s is bigger}).

\begin{figure}[ht]
    \centering
    \begin{tikzpicture}[scale=1.1,main node/.style={circle,draw,color=black,fill=black,inner sep=0pt,minimum width=3pt}]
        \tikzset{cross/.style={cross out, draw=black, fill=none, minimum size=2*(#1-\pgflinewidth), inner sep=0pt, outer sep=0pt}, cross/.default={2pt}}
	\tikzset{rectangle/.append style={draw=brown, ultra thick, fill=red!30}}

\node[main node] (a) at (-0.25,0) [label=below:$1$]{};
\node[main node] (b) at (6,0) [label=below:$p_2$]{};
\node[main node] (x) at (0.5,0.75)[label=above:$x$]{};
\node[main node] (t1) at (0.25,0)[label=above:]{};
\node[main node] (t2) at (0.75,0)[label=below:]{};

\node[main node] (r2) at (1.25,0)[]{};
     \draw[line width= 1 pt] (a) to [bend left=0](b);
     \draw[line width= 1 pt] (t1) to [bend left=0](x);
     \draw[line width= 1 pt] (t2) to [bend left=0](x);

     \draw[line width= 1 pt] (x) to [bend right=-30](b);
     \draw[line width= 1 pt] (t2) to [bend left=30](b);

     \draw[opacity=0.4,red, line width= 4 pt] (a) to [bend left=0](t2);
     \draw[opacity=0.4,red, line width= 4 pt] (r2) to [bend left=0](b);
     \draw[opacity=0.4,red, line width= 4 pt] (t2) to [bend left= 30](b);

    \end{tikzpicture}
     \caption{Obtaining a $(t,2)$-switch, when $p_2$ is adjacent to both $x$ and $3$.} \label{fig:s is bigger}

\end{figure}

\noindent Now we define $T^*$ differently as follows: $x\rightarrow{} p_1$ and $v\rightarrow{}v+1$ for all other $v$ in $T$, and we can proceed as before, by finding an edge $(i,j)$ between $T^*$ and $A^+$. Crucially, note that if now $i=3$, after doing rotation (as in Fig.~\ref{fig:IIIsub1}), we do destroy the triangle $2,x,3$, but the triangle $3,x,4$ is preserved, so it was crucial that $s\geq 2$.

\subsection{Triangle in the middle: $t>2$} 
 
\textbf{\ref{case:p2} holds:}
First, note that $p_1$ is not adjacent to both $t$ and $x$, and $p_1$ is not adjacent to both $t$ and $t-1$, as in both cases we get a better switch (see Fig.~\ref{fig:p_1 adjacency}).

\begin{figure}[ht]
    \centering
    \begin{subfigure}{.5\textwidth}
    \begin{tikzpicture}[scale=1.1,main node/.style={circle,draw,color=black,fill=black,inner sep=0pt,minimum width=3pt}]
        \tikzset{cross/.style={cross out, draw=black, fill=none, minimum size=2*(#1-\pgflinewidth), inner sep=0pt, outer sep=0pt}, cross/.default={2pt}}
	\tikzset{rectangle/.append style={draw=brown, ultra thick, fill=red!30}}

\node[main node] (a) at (-0.25,0) [label=below:$p_1$]{};
\node[main node] (b) at (7,0) [label=below:$p_2$]{};
\node[main node] (x) at (2.75,0.75)[label=above:$x$]{};
\node[main node] (t1) at (2.5,0)[label=below:]{};
\node[main node] (t2) at (3,0)[]{};

\node[main node] (r) at (2,0)[]{};
     \draw[line width= 1 pt] (a) to [bend left=0](b);
     \draw[line width= 1 pt] (t1) to [bend left=0](x);
     \draw[line width= 1 pt] (t2) to [bend left=0](x);

     \draw[line width= 1 pt] (a) to [bend right=-10](x);
     \draw[line width= 1 pt] (a) to [bend left=20](t1);
   

      \draw[opacity=0.4,red, line width= 4 pt] (a) to [bend right=0](r);
     \draw[opacity=0.4,red, line width= 4 pt] (a) to [bend right=-10](x);
     \draw[opacity=0.4,red, line width= 4 pt] (x) to [bend left=0](t2);
     \draw[opacity=0.4,red, line width= 4 pt] (t2) to [bend left=0](b);
     \draw[opacity=0.4,blue, line width= 4 pt] (x) to [bend left=0](t1);
     \draw[opacity=0.4,blue, line width= 4 pt] (a) to [bend left=20](t1);

    \end{tikzpicture}
     \caption{The $(t-1,\cdot)$-switch where the path is red,\\and two blue edges to complete the triangle.} \label{fig:p2b}
   \end{subfigure}%
    \begin{subfigure}{.5\textwidth}
  \centering
  \begin{tikzpicture}[scale=1.1,main node/.style={circle,draw,color=black,fill=black,inner sep=0pt,minimum width=3pt}]
        \tikzset{cross/.style={cross out, draw=black, fill=none, minimum size=2*(#1-\pgflinewidth), inner sep=0pt, outer sep=0pt}, cross/.default={2pt}}
	\tikzset{rectangle/.append style={draw=brown, ultra thick, fill=red!30}}

\node[main node] (a) at (-0.25,0) [label=below:$p_1$]{};
\node[main node] (b) at (7,0) [label=below:$p_2$]{};
\node[main node] (x) at (2.75,0.75)[label=above:$x$]{};
\node[main node] (t1) at (2.5,0)[label=below:]{};
\node[main node] (t2) at (3,0)[]{};

\node[main node] (r) at (2,0)[]{};
\node[main node] (r1) at (1.5,0)[]{};
     \draw[line width= 1 pt] (a) to [bend left=0](b);
     \draw[line width= 1 pt] (t1) to [bend left=0](x);
     \draw[line width= 1 pt] (t2) to [bend left=0](x);

     \draw[line width= 1 pt] (a) to [bend left=30](r);
     \draw[line width= 1 pt] (a) to [bend left=50](t1);
   

      \draw[opacity=0.4,red, line width= 4 pt] (a) to [bend right=0](r1);
     \draw[opacity=0.4,red, line width= 4 pt] (t1) to [bend left=0](x);
     \draw[opacity=0.4,red, line width= 4 pt] (x) to [bend left=0](t2);
     \draw[opacity=0.4,red, line width= 4 pt] (t2) to [bend left=0](b);
     \draw[opacity=0.4,blue, line width= 4 pt] (r) to [bend left=0](t1);
     \draw[opacity=0.4,red, line width= 4 pt] (a) to [bend left=50](t1);
   \draw[opacity=0.4,blue, line width= 4 pt] (a) to [bend left=30](r);

    \end{tikzpicture}
  \caption{The $(t-2,\cdot)$-switch where the path is red,\\and two blue edges to complete the triangle.}
  \label{fig:p1b}
\end{subfigure}
    \caption{$p_1$ is not adjacent to both $t$ and $x$, and $p_1$ is not adjacent to both $t$ and $t-1$ as in both cases we create a better switch.}
    \label{fig:p_1 adjacency} 
\end{figure}

\noindent Now, to each vertex in $T$ we assign a unique vertex as follows, depending on the adjacencies between $p_1$ and the set $Q=\{t,t+1,x\}$:
\begin{enumerate}[label=(\roman*)]
\itemsep0em 
    \item If $p_1$ is adjacent to at most one vertex in $Q$, then assign: $x\rightarrow{}t-1$, $t+1\rightarrow{}t-1$ and $v\rightarrow{} v^-$ for all other vertices in $T$.
    \item If $p_1$ is adjacent to only $x,t+1$ in $Q$, then: $x\rightarrow{}t$, $t+1\rightarrow{}t-1$ and $v\rightarrow{} v^-$ for all other vertices in $T$.
     \item If $p_1$ is adjacent only to $t,t+1$ in $Q$, then by the observation above it is not adjacent to $t-1$. We then take: $t\rightarrow{}t-1$, $t+1\rightarrow{}t-2$ and $v\rightarrow{} v^-$ for all other vertices in $T$.
\end{enumerate}
As shown before, $p_1$ cannot be adjacent to both $x,t$ and so, one of the options above must hold. Let $T^*$ be the set of assigned vertices, and note that $|T^*| = |T|$ and thus $|T^* \cup T_{out} | \geq b$. Hence we have an edge $(i,j)$ between $T^*$ and $S^+$, and we can check that in each case we either get a $(t',s')$-switch with some $t' < t$ or a $\tilde C_{\ell+1}$. 

Indeed, for (i) we have a situation as depicted in Fig.~\ref{fig:B} if $i=t-1$, and Fig.~\ref{fig:first rotation} otherwise. For (ii) we have a situation as in Fig.~\ref{fig:C} if $i=t$, as in Fig.~\ref{fig:B} if $i=t-1$ and otherwise we have again the situation in Fig.~\ref{fig:first rotation}. For (iii) we have the situation of Fig.~\ref{fig:B} if $i \in \{t-1,t-2\}$ and the situation of Fig.~\ref{fig:first rotation} otherwise.

\begin{figure}[ht]
    \centering
    \begin{tikzpicture}[scale=1.1,main node/.style={circle,draw,color=black,fill=black,inner sep=0pt,minimum width=3pt}]
        \tikzset{cross/.style={cross out, draw=black, fill=none, minimum size=2*(#1-\pgflinewidth), inner sep=0pt, outer sep=0pt}, cross/.default={2pt}}
	\tikzset{rectangle/.append style={draw=brown, ultra thick, fill=red!30}}

\node[main node] (a) at (-0.25,0) [label=below:$p_1$]{};
\node[main node] (b) at (7,0) [label=below:$p_2$]{};
\node[main node] (x) at (2.75,0.75)[label=above:$x$]{};
\node[main node] (t1) at (2.5,0)[label=below:]{};
\node[main node] (t2) at (3,0)[]{};

\node[main node] (r2) at (5.5,0)[]{};
\node[main node] (r2f) at (6,0)[]{};
\node[main node] (r1) at (1,0)[]{};
\node[main node] (r1f) at (1.5,0)[]{};
     \draw[line width= 1 pt] (a) to [bend left=0](b);
     \draw[line width= 1 pt] (t1) to [bend left=0](x);
     \draw[line width= 1 pt] (t2) to [bend left=0](x);

     \draw[line width= 1 pt] (b) to [bend right=60](r2);
     \draw[line width= 1 pt, dotted] (r2f) to [bend left=40](r1);
   

      \draw[opacity=0.4,red, line width= 4 pt] (b) to [bend right=60](r2);
     \draw[opacity=0.4,red, line width= 4 pt] (r2f) to [bend left=40](r1);
     \draw[opacity=0.4,red, line width= 4 pt] (a) to [bend left=0](r1);
     \draw[opacity=0.4,red, line width= 4 pt] (r2f) to [bend left=0](b);
     \draw[opacity=0.4,red, line width= 4 pt] (r1f) to [bend left=0](r2);

    \end{tikzpicture}
     \caption{The red line represents the path of a switch with the triangle closer to $p_1$} \label{fig:B}
\end{figure}

\begin{figure}[h]
    \centering
    \begin{tikzpicture}[scale=1.1,main node/.style={circle,draw,color=black,fill=black,inner sep=0pt,minimum width=3pt}]
        \tikzset{cross/.style={cross out, draw=black, fill=none, minimum size=2*(#1-\pgflinewidth), inner sep=0pt, outer sep=0pt}, cross/.default={2pt}}
	\tikzset{rectangle/.append style={draw=brown, ultra thick, fill=red!30}}

\node[main node] (a) at (-0.25,0) [label=below:$p_1$]{};
\node[main node] (b) at (7,0) [label=below:$p_2$]{};
\node[main node] (x) at (2.75,0.75)[label=above:$x$]{};
\node[main node] (t1) at (2.5,0)[label=below:]{};
\node[main node] (t2) at (3,0)[]{};

\node[main node] (r2) at (5.5,0)[]{};
\node[main node] (r2f) at (6,0)[]{};
\node[main node] (r1) at (2,0)[]{};
\node[main node] (r) at (3.5,0)[]{};

     \draw[line width= 1 pt] (a) to [bend left=0](b);
     \draw[line width= 1 pt] (t1) to [bend left=0](x);
     \draw[line width= 1 pt] (t2) to [bend left=0](x);

     \draw[line width= 1 pt] (b) to [bend right=60](r2);
     \draw[line width= 1 pt] (a) to [bend right=60](t2);
     \draw[line width= 1 pt] (a) to [bend right=0](x);
     \draw[line width= 1 pt, dotted] (r2f) to [bend left=40](t1);
   

      \draw[opacity=0.4,red, line width= 4 pt] (b) to [bend right=60](r2);
     \draw[opacity=0.4,red, line width= 4 pt] (r2f) to [bend left=40](t1);
     \draw[opacity=0.4,blue, line width= 4 pt] (a) to [bend right=60](t2);
     \draw[opacity=0.4,red, line width= 4 pt] (a) to [bend right=0](x);
     \draw[opacity=0.4,red, line width= 4 pt] (a) to [bend left=0](r1);
      \draw[opacity=0.4,red, line width= 4 pt] (x) to [bend left=0](t1);
     \draw[opacity=0.4,red, line width= 4 pt] (r2f) to [bend left=0](b);
      \draw[opacity=0.4,red, line width= 4 pt] (r2) to [bend left=0](r);
       \draw[opacity=0.4,blue, line width= 4 pt] (x) to [bend right=0](t2);

    \end{tikzpicture}
     \caption{The red line represents the path of a switch with the (blue) triangle closer to $p_1$} \label{fig:C}
\end{figure}

\noindent\textbf{\ref{case:p1} holds:}
Recall that by Claim~\ref{cl:neighbours before t}, the vertex $p_2$ has no neighbours before $t$ in the ordering $\pi$ and that $T_{out} = \emptyset$. To each vertex in $T$ we can then assign a unique vertex as follows: $x\rightarrow t-2$, $t\rightarrow t-1$ and $v\rightarrow v+1$ for all other $v\in T$ (which must have $v\geq t+1$). Let $T^*$ be the set of assigned vertices, and note then that $|T^*| = |T|\geq b$. Hence we have an edge $(i,j)$ between $T^*$ and $A^+$, which are disjoint, as we already explained in Section 2.2, Part (B). In each case we either get a $(t',\cdot)$-switch with $t' < t$ or we create a $\tilde C_{\ell+1}$. 
Indeed, if $i=t-1$ or $i=t-2$ then we are done by Fig~\ref{fig:G}, otherwise we are done by Fig.~\ref{fig:IIIsub1}.

\begin{figure}[h]
    \centering
    \begin{tikzpicture}[scale=1.1,main node/.style={circle,draw,color=black,fill=black,inner sep=0pt,minimum width=3pt}]
        \tikzset{cross/.style={cross out, draw=black, fill=none, minimum size=2*(#1-\pgflinewidth), inner sep=0pt, outer sep=0pt}, cross/.default={2pt}}
	\tikzset{rectangle/.append style={draw=brown, ultra thick, fill=red!30}}

\node[main node] (a) at (-0.25,0) [label=below:$p_1$]{};
\node[main node] (b) at (7,0) [label=below:$p_2$]{};
\node[main node] (x) at (2.75,0.75)[label=above:$x$]{};
\node[main node] (t1) at (2.5,0)[label=below:]{};
\node[main node] (t2) at (3,0)[]{};

\node[main node] (r2) at (5.5,0)[]{};
\node[main node] (r2f) at (6,0)[]{};
\node[main node] (r1) at (1,0)[]{};
\node[main node] (r1f) at (1.5,0)[]{};
     \draw[line width= 1 pt] (a) to [bend left=0](b);
     \draw[line width= 1 pt] (t1) to [bend left=0](x);
     \draw[line width= 1 pt] (t2) to [bend left=0](x);

     \draw[line width= 1 pt] (a) to [bend right=50](r2);
     \draw[line width= 1 pt, dotted] (r2f) to [bend right=60](r1);
   

      \draw[opacity=0.4,red, line width= 4 pt] (a) to [bend right=50](r2);
     \draw[opacity=0.4,red, line width= 4 pt] (r2f) to [bend right=60](r1);
     \draw[opacity=0.4,red, line width= 4 pt] (a) to [bend left=0](r1);
     \draw[opacity=0.4,red, line width= 4 pt] (r2f) to [bend left=0](b);
     \draw[opacity=0.4,red, line width= 4 pt] (r1f) to [bend left=0](r2);

    \end{tikzpicture}
     \caption{The red line represents the path of a switch with the triangle closer to $p_1$} \label{fig:G}
\end{figure}

\noindent This completes the proof of Lemma~\ref{lem:extension}.
\end{proof}
\subsection{Completing the proof: $b\leq 2$}
When $a\leq b\leq 2$, the proof is significantly shorter.
Indeed, this implies that there is an edge between any two disjoint sets of size at least $2$ in $G$. If $|G|\leq 7$ one can check by hand that the statement holds and we leave this as an exercise to the reader (one can already assume that $G$ is Hamiltonian, as guaranteed by Theorem~\ref{thm:mcdiarmid}, and then analyse what cycles are created by adding edges between pairs of vertices in the Hamilton cycle).

Otherwise, first note that by Theorem \ref{thm:mcdiarmid}, $G$ contains a Hamilton cycle. We will now show that if $G$ contains a cycle of length $\ell$ with $6\leq \ell\leq n-2$, then it contains a cycle of length $\ell+1$. This reduces pancyclicity to only finding cycles of length $3,4,5$ and $6$ in $G$. Consider then a cycle $C_\ell$ in $G$ and suppose for sake of contradiction that there is no $C_{\ell + 1}$. Let $x,y$ be two vertices outside of the cycle $C_\ell$. Trivially, since $l \geq 6$, it must be that at least one of $x,y$ has at least $3$ neighbours in $C_\ell$ (otherwise there would be two vertices in $C_\ell$ not adjacent to $\{x,y\}$, contradicting the assumption on $G$). Without loss of generality, assume that $x$ is adjacent to $z_1,z_2,z_3 \in C_\ell$. If any pair of vertices $z_i,z_j$ is consecutive in the cycle $C_\ell$, then we can extend this cycle using $x$ to create a $C_{\ell + 1}$. Otherwise, fix some orientation of $C_{\ell}$ and for each $v \in C_{\ell}$, denote by $v^-$ the vertex before $v\in C_\ell$ in this orientation. By assumption, there is then an edge between the sets $\{z_1^-,x\}$ and $\{z_2^-,z_3^-\}$. In turn, it is easy to check that any such edge creates a cycle $C_{\ell+1}$ on the vertex set of $C_\ell + x$.

We now prove the existence of cycles of lengths from $3,4,5,6$. Note that we have already shown that $G$ contains a triangle $xyz$ in Claim \ref{claim:triangle}. Now, if two vertices on the triangle have a common neighbour outside then we also have a $C_4$. Otherwise, note that since by assumption there is an edge between every two disjoint sets of size two, it must be that every pair of vertices has a vertex with degree at least $\frac{n-3}{2} > 2$ - therefore, two vertices in the triangle have a disjoint neighbourhood of size at least $2$ outside the triangle. Between those two neighbourhoods there is an edge, which again gives a $C_4$ (and a $C_5$). If $G$ does not have a $C_5$, then we are in the former case with two triangles sharing an edge ($C_4$ with a diagonal). Now, all except one vertex outside of these four vertices have at least $2$ neighbours inside of it. The only way not to create a $C_5$ is to have all of these (at least 2) vertices be adjacent only to the two vertices of degree $3$. But then there is no edge
between those vertices outside, and the remaining vertices of our $C_4.$

Finally, if we have a $C_5$ then again all but at most one vertex outside of it, are adjacent to at least two vertices in $C_5$. One can check that since we have at least $2$ of them, this always gives a $C_6$ as well. This completes our proof.

\end{proof}
\section{Concluding remarks}\label{sec:concluding remarks}
Bondy's meta-conjecture states that every non-trivial condition which implies Hamiltonicity, also implies pancyclicity, up to a certain small collection of exceptional graphs. Clearly, there are some cases of natural Hamiltonicity conditions for which this statement fails. For example, it is well known (see \cite{KS-03}) that pseudorandom graphs are Hamiltonian, but even rather dense pseudo-random graphs might have no short cycles to be pancyclic. On the other hand, in addition to the results presented in this paper, we know by now that several well-known Hamiltonicity theorems can be extended to give pancyclicity, for example see \cite{bondy1971pancyclic, bauer1990hamiltonian, ourJOpaper}. Hence, it would be interesting to explore other interesting Hamiltonicity conditions and understand whether they indeed imply pancyclicity.


\begin{thebibliography}{10}
\small
\bibitem{ajtai1985first}
M.~Ajtai, J.~Koml{\'o}s, and E.~Szemer{\'e}di.
\newblock First occurrence of Hamilton cycles in random graphs.
\newblock {In Cycles in graphs (Burnaby, B.C., 1982), Vol. 115, North-Holland Mathematical Studies,
North-Holland, Amsterdam}, 115:173--178, 1985.

\bibitem{alon2021divisible}
N.~Alon and M.~Krivelevich.
\newblock Divisible subdivisions.
\newblock {\em Journal of Graph Theory}, 98(4):623--629, 2021.

\bibitem{alon2022cycle}
Y.~Alon, M.~Krivelevich, and E.~Lubetzky.
\newblock Cycle lengths in sparse random graphs.
\newblock {\em Random Structures \& Algorithms}, 61(3):444--461, 2022.

\bibitem{bauer1990hamiltonian}
D.~Bauer and E.~Schmeichel.
\newblock Hamiltonian degree conditions which imply a graph is pancyclic.
\newblock {\em Journal of Combinatorial Theory, Series B}, 48(1):111--116,
  1990.

\bibitem{bondy10pancyclic}
J.~A.~Bondy.
\newblock Pancyclic graphs: recent results, infinite and finite sets.
\newblock In {\em Colloq. Math. Soc. J{\'a}nos Bolyai}, {K}eszthely, pp. 181--187, 1973.

\bibitem{bondy1971pancyclic}
J.~A.~Bondy.
\newblock Pancyclic graphs I.
\newblock {\em Journal of Combinatorial Theory, Series B}, 11(1):80--84, 1971.

\bibitem{bondy1980longest}
J.~A.~Bondy.
\newblock {\em Longest paths and cycles in graphs of high degree}.
\newblock Department of Combinatorics and Optimization, University of Waterloo,
  1980.
  

\bibitem{bucic2021cycles}
M. Buci{\'c}, L. Gishboliner, and B. Sudakov.
\newblock Cycles of many lengths in {H}amiltonian graphs.
\newblock {\em Forum of Mathematics, Sigma}, 10, E70, 2022.

\bibitem{chen2022hamilton}
M. Chen
\newblock Hamilton-connected, vertex-pancyclic and bipartite holes.
\newblock {\em Discrete Mathematics}, 345(12):113158, 2022.

\bibitem{chvatal1972hamilton}
V. Chv{\'a}tal.
\newblock On Hamilton's ideals.
\newblock {\em Journal of Combinatorial Theory, Series B}, 12(2):163--168,
  1972.

\bibitem{chvatal1972note}
V. Chv{\'a}tal and P. Erd\H{o}s.
\newblock A note on Hamiltonian circuits.
\newblock {\em Discrete Mathematics}, 2(2):111--113, 1972.

\bibitem{MR3545109}
B. Csaba, D. K\"{u}hn, A. Lo, D. Osthus, and A. Treglown.
\newblock Proof of the 1-factorization and {H}amilton decomposition
  conjectures.
\newblock {\em Memoirs of the American Mathematical Society}, 244, monograph 1154, 170 pages, 2016.

\bibitem{cuckler2009hamiltonian}
B. Cuckler and J. Kahn.
\newblock Hamiltonian cycles in Dirac graphs.
\newblock {\em Combinatorica}, 29(3):299--326, 2009.


\bibitem{dirac1952some}
G.~A.~Dirac.
\newblock Some theorems on abstract graphs.
\newblock {\em Proceedings of the London Mathematical Society}, 3(1):69--81,
  1952.

\bibitem{ourJOpaper}
N.~Dragani\'c, D.~Munh\'a~Correia and B.~Sudakov.
\newblock Chv\'atal-Erd\H{o}s condition for pancyclicity.
\newblock {\em arXiv:2301.10190} 2023.

\bibitem{ourpaper}
N.~Dragani\'c, D.~Munh\'a~Correia and B.~Sudakov.
\newblock Pancyclicity of Hamiltonian graphs.
\newblock {\em arXiv:2209.03325} 2022.

\bibitem{erdos1972some}
P.~Erd\H{o}s.
\newblock Some problems in graph theory.
\newblock In {\em Hypergraph Seminar}, pp. 187--190. Springer, 1972.


\bibitem{fan1984new}
G.~H.~Fan.
\newblock New sufficient conditions for cycles in graphs.
\newblock {\em Journal of Combinatorial Theory, Series B}, 37(3):221--227,
  1984.

\bibitem{ferber2018counting}
A. Ferber, E. Long, and B. Sudakov.
\newblock Counting Hamilton decompositions of oriented graphs.
\newblock {\em International Mathematics Research Notices},
  2018(22):6908--6933, 2018.

\bibitem{friedman2021cycle}
L.~Friedman and M.~Krivelevich.
\newblock Cycle lengths in expanding graphs.
\newblock {\em Combinatorica}, 41(1):53--74, 2021.



\bibitem{gould2014recent}
R.~J.~Gould.
\newblock Recent advances on the Hamiltonian problem: Survey III.
\newblock {\em Graphs and Combinatorics}, 30(1):1--46, 2014.

\bibitem{liu2020solution}
H. Liu and R. Montgomery.
\newblock A solution to Erd\H{o}s and Hajnal's odd cycle problem.
\newblock {\em arXiv preprint arXiv:2010.15802}, 2020.

\bibitem{jackson1990chvatal}
B. Jackson and O. Ordaz.
\newblock Chv{\'a}tal-{E}rd\H{o}s conditions for paths and cycles in graphs and
  digraphs. A survey.
\newblock {\em Discrete mathematics}, 84(3):241--254, 1990.

\bibitem{keevash2010pancyclicity}
P. Keevash and B. Sudakov.
\newblock Pancyclicity of Hamiltonian and highly connected graphs.
\newblock {\em Journal of Combinatorial Theory, Series B}, 100(5):456--467,
  2010.

\bibitem{krivelevich2011critical}
M. Krivelevich.
\newblock The critical bias for the Hamiltonicity game is $(1+ o(1))n/\ln n$.
\newblock {\em Journal of the American Mathematical Society}, 24(1):125--131,
  2011.

\bibitem{KS-03}
M.~Krivelevich and B.~Sudakov.
\newblock Sparse pseudo-random graphs are {H}amiltonian.
\newblock \emph{J. Graph Theory}, 42\penalty0 (1):\penalty0 17--33, 2003.


\bibitem{krivelevich2014robust}
M. Krivelevich, C. Lee, and B. Sudakov.
\newblock Robust Hamiltonicity of Dirac graphs.
\newblock {\em Transactions of the American Mathematical Society},
  366(6):3095--3130, 2014.

\bibitem{kuhn2013hamilton}
D. K{\"u}hn and D. Osthus.
\newblock Hamilton decompositions of regular expanders: a proof of Kelly’s
  conjecture for large tournaments.
\newblock {\em Advances in Mathematics}, 237:62--146, 2013.

\bibitem{liu2018cycle}
C.-H.~Liu and J.~Ma.
\newblock Cycle lengths and minimum degree of graphs.
\newblock {\em Journal of Combinatorial Theory, Series B}, 128:66--95, 2018.

\bibitem{MR3727617}
D. K\"{u}hn and D. Osthus.
\newblock Hamilton cycles in graphs and hypergraphs: an extremal perspective.
\newblock In {\em Proceedings of the {I}nternational {C}ongress of
  {M}athematicians---{S}eoul 2014. {V}ol. {IV}}, pp. 381--406. Kyung Moon Sa,
  Seoul, 2014.


\bibitem{mcdiarmid2017hamilton}
C.~McDiarmid and N.~Yolov.
\newblock Hamilton cycles, minimum degree, and bipartite holes.
\newblock {\em Journal of Graph Theory}, 86(3):277--285, 2017.

\bibitem{milans2012cycle}
K.~G.~Milans, F.~Pfender, D.~Rautenbach, F.~Regen, and D.~B.~West.
\newblock Cycle spectra of Hamiltonian graphs.
\newblock {\em Journal of Combinatorial Theory, Series B}, 102(4):869--874,
  2012.
  
\bibitem{posa1976hamiltonian}
  L.~P{\'o}sa,
  \newblock {Hamiltonian circuits in random graphs.}
  \newblock {\em Discrete Mathematics},
  14(4):359--364, 1976.

\bibitem{schmeichel1988cycle}
E.~F.~Schmeichel and S.~L.~Hakimi.
\newblock A cycle structure theorem for Hamiltonian graphs.
\newblock {\em Journal of Combinatorial Theory, Series B}, 45(1):99--107, 1988.

\bibitem{verstraete2016extremal}
J. Verstra{\"e}te.
\newblock Extremal problems for cycles in graphs.
\newblock In {\em Recent trends in combinatorics}, pp. 83--116. Springer,
  2016.

\end{thebibliography}
\end{document}